\setlist[enumerate,1]{label=\textnormal{(\alph*)},ref=\textnormal{(\alph*)}}
\setlist[enumerate,2]{label=\textnormal{(\arabic*)},ref=\textnormal{(\arabic*)}}
\newcommand{\lref}[1]{\ref{#1}} % refer to enumerate, e.g. item~\lref{} of Lemma~\ref{}
\newcommand{\tlref}[1]{~\ref{#1}} % refer to enumerate of thm, e.g. Theorem~\ref{}\tlref{}
\algnewcommand\algorithmicinput{\textbf{Input:}}
\algnewcommand\Input{\item[\algorithmicinput]}
\algnewcommand\algorithmicoutput{\textbf{Output:}}
\algnewcommand\Output{\item[\algorithmicoutput]}
\algnewcommand\algorithmicbreak{\textbf{break}}
\algnewcommand\Break{\algorithmicbreak}
\algnewcommand\algorithmiccontinue{\textbf{continue}}
\algnewcommand\Continue{\algorithmiccontinue}
\algnewcommand\algorithmicset{\textbf{set}}
\algnewcommand\Set[1]{#1}
\algnewcommand\algorithmicgoto{\textbf{goto}}
\algnewcommand\Goto[0]{\algorithmicgoto{} }
\newcommand{\dprobfont}[1]{\textsc{#1}}
\newcommand{\dprobfontinp}[1]{\textsc{#1}}
\newlength{\lenl}\setlength{\lenl}{2em} 
\newlength{\lenm}
\newlength{\lenr}
\newcommand{\vproblem}[5]{
\setlength{\lenm}{\maxof{\widthof{#2\quad}}{\widthof{#4\quad}}}
\setlength{\lenr}{\textwidth-\lenl-\lenm}
\begin{flalign*}\arraycolsep=0pt
\begin{tabular}{@{}p{\lenl}@{}p{\lenm}@{}p{\lenr}@{}}
& \multicolumn{2}{@{}l@{}}{#1} \\
& \begin{minipage}[t]{\lenm}#2\vspace{1.5pt}\end{minipage} 
& \begin{minipage}[t]{\lenr}#3\vspace{1.5pt}\end{minipage} \\ 
& \begin{minipage}[t]{\lenm}#4\end{minipage} 
& \begin{minipage}[t]{\lenr}#5\end{minipage} \\
\end{tabular}&&
\end{flalign*}
}
\newcommand{\dproblem}[3]{\vproblem{\dprobfontinp{#1}}{Input:}{#2}{Problem:}{#3}}
\def\clap#1{\hbox to 0pt{\hss#1\hss}}
\def\mathclap{\mathpalette\mathclapinternal}
\def\mathclapinternal#1#2{%
\clap{$\mathsurround=0pt#1{#2}$}}
\newcommand{\rmsgg}[5]{\mathcal{M}{#5}(#1,#2,#3,#4)}
\newcommand{\rmsg}[4]{\rmsgg{#1}{#2}{#3}{#4}{}}
\newcommand{\rmsgz}[4]{\rmsgg{#1}{#2}{#3}{#4}{^0}}
\newcommand{\n}{\underline 1}
\newcommand{\1}{1}
\newcommand{\0}{0}
\newcommand{\Tsg}{T}
\newcommand{\Isg}{I}
\newcommand{\ptime}{\textnormal{P}}
\newcommand{\np}{\textnormal{NP}}
\newcommand{\pspace}{\textnormal{PSPACE}}
\newcommand{\exptime}{\textnormal{EXPTIME}}
\newcommand{\smp}{\dprobfont{SMP}}
\newcommand{\sat}{\dprobfont{SAT}}
\newcommand{\true}{true}
\newcommand{\false}{false}
\newcommand{\N}{\mathbb{N}}
\newcommand{\alg}[1]{#1}
\newcommand{\subuni}[1]{ {\langle #1 \rangle} }
\newcommand{\sst}{\mid}
\newcommand{\rg}[1]{[#1]}
\newcommand{\gj}{\mathrel{\mathcal{J}}}
\newcommand{\gh}{\mathrel{\mathcal{H}}}
\newcommand{\gr}{\mathrel{\mathcal{R}}}
\newcommand{\gl}{\mathrel{\mathcal{L}}}
\newcommand{\gd}{\mathrel{\mathcal{D}}}
\newcommand{\gjle}{\leq_{\mathcal{J}}}
\newcommand{\gjless}{<_{\mathcal{J}}}
\newcommand{\grle}{\leq_{\mathcal{R}}}
\newcommand{\glle}{\leq_{\mathcal{L}}}
\newtheorem{thm}{Theorem}[section]
\newtheorem{lma}[thm]{Lemma}
\newtheorem{clr}[thm]{Corollary}
\newtheorem{clm}[thm]{Claim}
\theoremstyle{definition}
\newtheorem{prb}[thm]{Problem}
\theoremstyle{remark}
\begin{document}
\title[On semigroups with PSPACE-complete SMP]{%
On semigroups with PSPACE-complete subpower membership problem}
%~~~~~~~~ ~~~~~~~~~ ~~~~~~~~~ ~~~~~~~~~ ~~~~~~~~~ ~~~~~~~~~ ~~~~~~~~~ ~~~~~~~~~ ~~~~~~~~~ ~~~~~~~~
\author{Markus Steindl}
%\coauthor{}
\address{Institute for Algebra, Johannes Kepler University Linz, Altenberger St 69, 4040 Linz, Austria}
\address{Department of Mathematics, University of Colorado Boulder, Campus Box 395, Boulder, Colorado 80309-0395}
\email{\href{mailto:markus.steindl@colorado.edu}{markus.steindl@colorado.edu}}
\thanks{Supported by the Austrian Science Fund (FWF): P24285}
%~~~~~~~~ ~~~~~~~~~ ~~~~~~~~~ ~~~~~~~~~ ~~~~~~~~~ ~~~~~~~~~ ~~~~~~~~~ ~~~~~~~~~ ~~~~~~~~~ ~~~~~~~~
\subjclass[2000]{Primary: 20M99; Secondary: 68Q25}
\date{\today}
\keywords{subalgebras of powers, membership test, computational complexity, PSPACE-complete, NP-complete}
%~~~~~~~~ ~~~~~~~~~ ~~~~~~~~~ ~~~~~~~~~ ~~~~~~~~~ ~~~~~~~~~ ~~~~~~~~~ ~~~~~~~~~ ~~~~~~~~~ ~~~~~~~~
\begin{abstract}
Fix a finite semigroup $S$ and let $a_1, \ldots, a_k, b$ be tuples in a direct power $S^n$.
The subpower membership problem (\smp) for $S$ asks whether $b$ can be generated by $a_1, \ldots, a_k$. 
For combinatorial Rees matrix semigroups we establish a dichotomy result:
if the corresponding matrix is of a certain form,
then the \smp{} is in \ptime; otherwise it is \np-complete.
For combinatorial Rees matrix semigroups with adjoined identity,
we obtain a trichotomy: the \smp{} is either in \ptime, \np-complete, or \pspace-complete.
This result yields various semigroups with \pspace-complete \smp{} including the $6$-element Brandt monoid, the full transformation semigroup on $3$ or more letters, 
and semigroups of all $n$ by $n$ matrices over a field for $n\ge 2$.
\end{abstract}
\maketitle

\section{Introduction}

In this paper we continue the investigation of the subpower membership problem (\smp) for semigroups
started in \cite{BMS2015} and \cite{smpbands}. 
At the Conference on Order, Algebra, and Logics in Nashville, 2007, Ross Willard proposed the \smp{} as follows \cite{Willard2007}:
Fix a finite algebraic structure $\alg S$ with
finitely many basic operations. Then the \emph{subpower membership problem} for $\alg S$
is the following decision problem:
\dproblem{SMP(\textit{S})}{
$\{a_1,\ldots,a_k\} \subseteq S^n,b \in S^n$}{
Is $b$ in the subalgebra of $S^n$ generated by $a_1,\ldots,a_k$?}
The \smp{} occurs in connection with the constraint satisfaction problem (CSP) \cite{IMMVW2010}.
In the algebraic approach to the CSP, each constraint relation is considered to be a subalgebra of a power (\emph{subpower}) of a certain finite algebra whose operations are the polymorphisms of the constraint language.
Checking whether a given tuple belongs to a constraint relation represented by its generators
is precisely the \smp{} for the polymorphism algebra. 

The input size of $\smp(S)$ is essentially $(k+1)n$.
We can always decide the problem using a straightforward closure algorithm in time exponential in $n$.
Thus $\smp(S)$ is in \exptime{} for every algebra $\alg S$.
However, the following questions arise:
\begin{itemize}
\item How does the algebra $S$ affect the computational complexity of $\smp(S)$?
\item For which algebras $S$ can $\smp(S)$ be solved in time polynomial in $k$ and $n$?
\item When is the problem complete in \np, \pspace{}, or \exptime?
Can it also be complete in a class other than these?
\end{itemize}

\noindent
Mayr \cite{Mayr2012} proved that the \smp{} for Mal'cev algebras is in~\np.
He also showed that for certain generalizations of groups and quasigroups the \smp{} is in \ptime{}.
Kozik \cite{Kozik2008} constructed a finite algebra with \exptime-complete \smp{}.

For semigroups the \smp{} is in \pspace.
This was shown in \cite{BMS2015} by Bulatov, Mayr, and the author of the present paper.
We also proved that the \smp{} of the full transformation semigroup on five letters is \pspace-complete. 
It was the first algebra known to have a \pspace-complete \smp{}.
In the same paper a dichotomy result for commutative semigroups was established: if a commutative semigroup $S$
embeds into a direct product of a Clifford semigroup and a nilpotent semigroup,
then $\smp(S)$ is in \ptime; otherwise it is \np-complete.

Another dichotomy for idempotent semigroups was established in \cite{smpbands}: if an idempotent semigroup $S$ satisfies a certain pair of quasiidentities, then $\smp(S)$ is in \ptime; otherwise it is \np-complete.

The first result of the current work is a condition for a semigroup $S$ under which $\smp(S)$ is \np-hard:

\begin{thm}\label{thm:nphard_condition}
Let $r,s,t$ be elements of a finite semigroup $S$ such that $s$ does not generate a group and $rs=st=s$.
Then $\smp(S)$ is $\np$-hard.
\end{thm}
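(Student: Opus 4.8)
The plan is to establish \np-hardness by a polynomial-time many-one reduction from \dprobfont{1-in-3 SAT} (given a $3$-CNF formula, is there an assignment making \emph{exactly one} literal true in each clause?), which is \np-complete; a reduction from graph $3$-colorability would serve equally well. Two facts about $S$ drive everything. First, since $s$ does not generate a group, the cyclic semigroup $\langle s\rangle$ has index at least $2$, so $s\ne s^{j}$ for every $j\ge 2$; in particular $s\ne s^{2}$, and ``$s$ versus $s^{2}$'' is the bit of information the reduction reads off. Second, iterating $rs=s$ and $st=s$ gives $r^{a}s^{j}t^{b}=s^{j}$ for all $a,b\ge 0$ and all $j\ge 1$: within a single coordinate a nonempty block of $s$'s is unaffected by arbitrarily many $r$'s to its left and $t$'s to its right. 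We may moreover assume $r$ and $t$ are idempotent, as every power of $r$ still left-fixes $s$ and every power of $t$ still right-fixes $s$.

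From a formula with variables $x_{1},\dots,x_{m}$ and clauses $C_{1},\dots,C_{\ell}$ I would build generators $a_{1},\dots,a_{k}\in S^{n}$ and a target $b\in S^{n}$, with all coordinates drawn from $\{r,s,s^{2},t\}$ and $b$ essentially the all-$s$ tuple. For each variable there are two ``choice'' generators $g_{i}^{0},g_{i}^{1}$ (``$x_{i}$ false / true''), plus auxiliary generators whose only job is to pin down the order of a generating word. The coordinates come in two families. The \emph{selection} coordinate of $x_{i}$ gives value $s$ to $g_{i}^{0}$ and $g_{i}^{1}$ and makes all other generators inert there; read off a correctly ordered word it equals $s^{(\text{number of uses of }g_{i}^{0}\text{ or }g_{i}^{1})}$, so to hit the target value $s$ the word must use exactly one of $g_{i}^{0},g_{i}^{1}$, exactly once — hence a generating word encodes a genuine assignment. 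The \emph{clause} coordinate of $C_{j}$ gives value $s$ to exactly the generators $g_{i}^{\varepsilon}$ whose literal occurs in $C_{j}$, and makes every other generator inert (value $r$ if it is meant to stand to the left of the $s$-block, $t$ if to the right); then the coordinate evaluates to $s^{(\text{number of literals of }C_{j}\text{ satisfied})}$, so it meets the target value $s$ exactly when precisely one literal of $C_{j}$ is true. Thus $b$ is generated if and only if the formula is a yes-instance.

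The crux is that $S$ need not have an identity element, so $r$ and $t$ absorb only from one side; the identity $r^{a}s^{j}t^{b}=s^{j}$ applies to a clause coordinate only when every ``inert'' generator really does occur on the prescribed side of that coordinate's $s$-block, i.e.\ only when the generating word is, up to the encoded assignment, in one fixed canonical order. The heart of the argument is therefore the auxiliary generators and coordinates that force this canonical form, together with the verification that no \emph{malformed} word — wrong order, a repeated or omitted choice generator, or extra ``junk'' generators — can nevertheless match $b$ in every coordinate. This is delicate because products such as $sr$, $ts$, $rt$ are not pinned down by the hypotheses, so one cannot rule out a malformed word landing on $s$ in some coordinate purely formally; instead one must show, squeezing everything out of $rs=st=s$ and $s\ne s^{2}$ (and, where these do not suffice, out of the finer multiplicative structure of $\langle r,s,t\rangle$, presumably via a short case analysis), that any departure from the canonical form forces a visibly wrong value — typically $s^{2}$ where $s$ is required — somewhere. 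I expect this canonical-form enforcement to be by far the most technical part of the proof.
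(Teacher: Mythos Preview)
Your overall architecture---reduce from a SAT variant, encode an assignment by a choice of generators, and exploit $s\ne s^{2}$ to detect ``too many $s$'s'' in a coordinate---is the same as the paper's. But two genuine gaps remain, and the paper's proof closes both with ideas you have not yet found.

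First, your clause gadget does not work as stated. In a clause coordinate you put $s$ at the three literal-generators and want every other generator to be ``inert'', meaning $r$ if it stands to the left of the $s$-block and $t$ if to the right. But a generator whose variable index lies \emph{between} two of the clause's variable indices sits, in the canonical order, in the \emph{middle} of the $s$-block; neither $r$ nor $t$ is inert there, since $sr$ and $ts$ are unconstrained by the hypotheses. So even for a perfectly ordered canonical word the clause coordinate need not evaluate to $s^{(\text{number of true literals})}$. The paper sidesteps this entirely: its clause coordinates never count $s$'s. Instead it lets $e$ be an idempotent with $es=s$ and takes $g$ to be the idempotent power of $se$; then $\{e,g\}$ is a two-element semilattice, and the $j$-th clause coordinate uses only values in $\{e,g\}$ (with $g$ exactly at the generators whose literal appears in $C_{j}$). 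A product in $\{e,g\}$ equals $g$ iff some factor is $g$, so this gadget implements an \emph{order-independent} OR. This is why the paper reduces from plain \sat{} rather than 1-in-3 \sat{}.

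Second, the ``canonical-form enforcement'' you flag as the most technical part is in fact the conceptual heart of the argument, and it is not a case analysis. What one actually needs is: \emph{any} product in $e,s,f$ (with $es=sf=s$, $e,f$ idempotent) in which $s$ occurs at least twice fails to equal $s$---regardless of order. The paper proves exactly this (Lemma~\ref{lma:hardness_jclass_argument}) via Green's relations: if $s$ is regular one takes $e,f$ to be the idempotent powers of $su,us$ for an inverse $u$, and then between two occurrences of $s$ one finds a factor $s^{2}$, whence the whole product is $\gjless s$; if $s$ is not regular its principal factor is null and the conclusion is immediate. With this lemma in hand, a product hitting $s$ in the $i$-th selection coordinate forces $s$ to occur exactly once there, hence the variable indices used are distinct---and that is all that is needed, since the semilattice clause gadget does not care about order. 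Your proposal, by contrast, tries to enforce a full linear order and admits you do not yet know how; the paper's insight is that with the right clause gadget you never have to.
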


\noindent
We will prove this result in Section~\ref{sec:nphard} by reducing the Boolean satisfiability problem SAT to $\smp(S)$.

A semigroup is called \emph{combinatorial} if every subgroup has one element.
Combinatorial \emph{Rees matrix semigroups} are of the form $\rmsgz{\{1\}}I\Lambda P$ (see \cite[Theorem 3.2.3]{Howie1995}). 
We give the following alternative notation:
For nonempty sets $I,\Lambda$ and a matrix $P \in \{0,1\}^{\Lambda \times I}$ we
let $S_P := (I \times \Lambda)\cup\{0\}$ and define a multiplication on $S_P$ by
\begin{gather*} 
[i,\lambda]\cdot[j,\mu] := 
\begin{cases} [i,\mu] &\text{if } P(\lambda,j)=1, \\ 
0 &\text{if } P(\lambda,j)=0,
\end{cases} \\
0\cdot[i,\lambda] := [i,\lambda]\cdot 0 := 0\cdot0 := 0.
\end{gather*}
It is easy to see that $S_P$ is indeed a combinatorial semigroup.
We say the matrix $P \in \{0,1\}^{\Lambda \times I}$ has \emph{one block} if there exist $J \subseteq I$, $\Delta \subseteq \Lambda$ such that 
for $i\in I$, $\lambda \in \Lambda$,
\[P(\lambda,i)=1 \quad\text{if and only if}\quad (\lambda,i)\in\Delta\times J. \]
For $P=\left(\begin{smallmatrix} 1 & 0 \\ 0 & 1	\end{smallmatrix}\right)$ we call $B_2 := S_P$ the \emph{Brandt semigroup}, 
and for $P=\left(\begin{smallmatrix} 1 & 1 \\ 1 & 0	\end{smallmatrix}\right)$ we denote $S_P$ by $A_2$.

In Section~\ref{sec:comb_reesmatrix} we establish the following two results:

\begin{thm}\label{thm:comb_rmsg_dichotomy}
Let $S_P$ be a finite combinatorial Rees matrix semigroup.
If the matrix $P$ has one block, 
then $\smp(S_P)$ is in \ptime. Otherwise $\smp(S_P)$ is \np-complete.
\end{thm}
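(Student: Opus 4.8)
The plan is to treat the two directions separately. For the hardness direction, suppose $P$ does not have one block. First I would observe that the columns of $P$ (equivalently, the "row-support" sets $\{\lambda : P(\lambda,i)=1\}$ for $i\in I$) cannot all be comparable singletons-style blocks, so there exist indices witnessing a forbidden $2\times 2$ pattern: after deleting zero rows and zero columns and identifying equal columns, one can find $i,j\in I$ and $\lambda,\mu\in\Lambda$ with $P(\lambda,i)=P(\lambda,j)=P(\mu,i)=1$ and $P(\mu,j)=0$ (this is exactly the submatrix defining $A_2$). I would then locate elements $r,s,t\in S_P$ of the shape required by Theorem~\ref{thm:nphard_condition}: take $s=[i,\mu]$ (or a similar nonzero idempotent-adjacent element), and choose $r,t$ among $[\,\cdot\,,\lambda]$- and $[j,\,\cdot\,]$-type elements so that $rs=st=s$ while $s$ lies in a trivial subgroup and hence does not generate a group. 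Since $S_P$ is combinatorial, no nonzero element generates a group, so the only real content is arranging $rs=st=s$, which the forbidden pattern supplies. Theorem~\ref{thm:nphard_condition} then gives $\np$-hardness, and membership in $\np$ follows because $\smp(S)$ is in $\pspace$ for all semigroups and, more to the point, I would give a short direct argument (or invoke the general polynomial-size-witness phenomenon for combinatorial Rees matrix semigroups) showing a generated tuple has a generating word of polynomial length: products in $S_P$ stabilize quickly since $[i,\lambda][j,\mu]$ only depends on the first and last coordinates and whether an intermediate $P$-entry vanishes, so short witnesses suffice. Hence $\np$-complete.

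For the tractable direction, assume $P$ has one block, say $P(\lambda,i)=1 \iff (\lambda,i)\in\Delta\times J$. Here I would analyze the structure of products explicitly. A product $[i_1,\lambda_1]\cdots[i_m,\lambda_m]$ equals $[i_1,\lambda_m]$ if every consecutive pair has $P(\lambda_\ell,i_{\ell+1})=1$, i.e.\ iff $\lambda_1,\dots,\lambda_{m-1}\in\Delta$ and $i_2,\dots,i_m\in J$; otherwise it is $0$. So in the one-block case the subsemigroup structure is very rigid: the nonzero part behaves almost like a rectangular band once you restrict to $J$ in the middle coordinates and $\Delta$ in the middle co-coordinates, with the endpoints free. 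I expect this to translate, on a power $S_P^n$, into a membership criterion that can be checked coordinatewise together with a bounded amount of global bookkeeping about which coordinates "survive" (stay nonzero) along a candidate product.

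Concretely, given generators $a_1,\dots,a_k\in S_P^n$ and a target $b\in S_P^n$, I would first handle the zero pattern: let $Z\subseteq\{1,\dots,n\}$ be the set of coordinates where $b$ is $0$. Any word evaluating to $b$ must be $0$ exactly on $Z$ and nonzero off $Z$; using the product description above, being nonzero on a coordinate forces, for all but the first and last letters of the word, that the relevant local data lie in $\Delta\times J$, which is a constraint that is either satisfiable by a single generator, by a length-two product, or not at all. I would then argue that a polynomial-time algorithm can: (i) determine, for each pair of generators, the "transition" they induce; (ii) reduce the problem to reachability in a polynomial-size auxiliary structure (a graph on coordinate-pattern states, or a $2$-SAT / Horn-style instance encoding "coordinate $x$ must stay alive"); and (iii) check the first/last coordinate values of $b$ separately. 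The main obstacle, and the step I would spend the most care on, is proving that the one-block hypothesis really does collapse the combinatorial explosion — i.e.\ establishing a polynomial bound on the length (or a polynomial-size certificate) of a generating word, and showing the coordinatewise-plus-bookkeeping conditions are not only necessary but sufficient. Once that structural lemma is in place, the $\ptime$ algorithm and its correctness proof are routine.
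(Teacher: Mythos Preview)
Your hardness argument has a genuine gap. You claim that if $P$ does not have one block then (after cleanup) one can always find the $A_2$ pattern, i.e.\ indices with $P(\lambda,i)=P(\lambda,j)=P(\mu,i)=1$ and $P(\mu,j)=0$. This is false: the Brandt matrix $P=\left(\begin{smallmatrix}1&0\\0&1\end{smallmatrix}\right)$ has no row containing two $1$s, so no such pattern exists, yet $P$ certainly does not have one block. (Relatedly, your aside that in a combinatorial Rees matrix semigroup ``no nonzero element generates a group'' is wrong: any $[i,\lambda]$ with $P(\lambda,i)=1$ is idempotent and generates the trivial group $\{[i,\lambda]\}$; your proposed $s=[i,\mu]$ even lands in that case.) The correct combinatorial observation is weaker. ``Not one block'' means precisely that the support of $P$ is not a rectangle, so there exist $(\lambda,i)$ and $(\mu,j)$ in the support with $(\mu,i)$ outside it: $P(\lambda,i)=P(\mu,j)=1$ and $P(\mu,i)=0$. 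Taking $r=[i,\lambda]$, $s=[i,\mu]$, $t=[j,\mu]$ gives $rs=st=s$ and $s^2=0$, so $s$ does not generate a group, and Theorem~\ref{thm:nphard_condition} applies directly. This is the paper's route.

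Your \np{} upper bound sketch is fine and aligns with the paper's Lemmas~\ref{lma:combrmsg2}--\ref{lma:combrmsg3_np}. For the tractable direction your plan is sound in spirit but more elaborate than needed: the paper shows (Lemma~\ref{lma:combrmsg_ptime}) that once $P$ has one block $\Delta\times J$, it suffices to form the single product $d=\prod\{a\in A \mid a(\rg m)\subseteq J\times\Delta\}$ over the nonzero positions $\rg m$ of $b$, and then test whether $b=a_1da_2$ for some $a_1,a_2\in A$. No reachability graph or 2-SAT layer is required; the one-block hypothesis collapses the ``middle'' of any generating word to a \emph{single} factor.
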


\begin{clr}\label{clr:comb_reesmatrix_a2_b2}
The \smp{} for the Brandt semigroup $B_2$ and for the semigroup $A_2$ is \np-complete.
\end{clr}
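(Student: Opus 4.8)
The plan is to apply Theorem~\ref{thm:comb_rmsg_dichotomy} directly. Both $B_2$ and $A_2$ are, by definition, finite combinatorial Rees matrix semigroups of the form $S_P$, so by the dichotomy it suffices to check that in each case the matrix $P$ does \emph{not} have one block; the theorem then places $\smp(S_P)$ in the \np-complete case.

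First I would spell out the two matrices: $B_2 = S_P$ for $P = \left(\begin{smallmatrix} 1 & 0 \\ 0 & 1 \end{smallmatrix}\right)$ and $A_2 = S_P$ for $P = \left(\begin{smallmatrix} 1 & 1 \\ 1 & 0 \end{smallmatrix}\right)$, with $I = \Lambda = \{1,2\}$ in both cases. Then I would argue by contradiction. Suppose $P$ has one block, witnessed by $J \subseteq I$ and $\Delta \subseteq \Lambda$ with $P(\lambda,i) = 1$ precisely when $(\lambda,i) \in \Delta \times J$. For $B_2$ the entries $P(1,1) = P(2,2) = 1$ force $\{1,2\} \subseteq \Delta$ and $\{1,2\} \subseteq J$, hence $\Delta \times J = \Lambda \times I$; but $P(1,2) = 0$, a contradiction. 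For $A_2$ the entry $P(2,1) = 1$ forces $2 \in \Delta$, the entry $P(1,2) = 1$ forces $2 \in J$, and $P(1,1) = 1$ gives $1 \in \Delta$ and $1 \in J$; so again $\Delta \times J = \Lambda \times I$ while $P(2,2) = 0$, a contradiction. Hence neither matrix has one block, and Theorem~\ref{thm:comb_rmsg_dichotomy} yields that $\smp(B_2)$ and $\smp(A_2)$ are \np-complete.

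There is essentially no obstacle once Theorem~\ref{thm:comb_rmsg_dichotomy} is in hand; the only point that needs care is reading the ``one block'' condition correctly, namely that the rows and columns carrying a $1$ are \emph{exactly} $\Delta$ and $J$ and that the whole rectangle $\Delta \times J$ is filled with $1$'s. The two small computations above confirm this requirement fails for $B_2$ and $A_2$, so the \np-complete alternative of the dichotomy applies in both cases.
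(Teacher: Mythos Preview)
Your proof is correct and follows exactly the paper's approach: the paper simply states that the result is immediate from Theorem~\ref{thm:comb_rmsg_dichotomy}, and you have merely spelled out the easy verification that neither $2\times 2$ matrix has one block.
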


\noindent
In Section~\ref{sec:pspace} we state a condition for semigroups $S$ under which $\smp(S)$ is \pspace-complete:

\begin{thm}\label{thm:mthm_pspace}
Let $S$ be a finite semigroup and $s,t,\n\in S$ such that
\begin{enumerate}
\item $sts = s$, 
\item $s$ does not generate a group,
\item $s\n = s$ and\/ $t\n = t$. 
\end{enumerate}
Then $\smp(S)$ is \pspace-complete.
\end{thm}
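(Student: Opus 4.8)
The upper bound is immediate: by \cite{BMS2015}, $\smp(S)$ lies in \pspace{} for every finite semigroup $S$, so the whole point of the theorem is \pspace-hardness, which I would prove by a polynomial-time many-one reduction from a \pspace-complete problem. The source I would use is the intersection non-emptiness problem for deterministic finite automata --- given DFAs $\mathcal{A}_1,\dots,\mathcal{A}_m$ over a common alphabet $\Sigma$, decide whether $L(\mathcal{A}_1)\cap\dots\cap L(\mathcal{A}_m)\neq\emptyset$ --- which is \pspace-complete. It is a natural target because the closure process inside a direct power of a semigroup with a zero-like element behaves like an intersection of regular constraints imposed on the sequence of generators that one multiplies together. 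The tuples built by the reduction will put only $s$, $t$, $\n$ and a handful of their products into coordinates, and correctness will rest on nothing beyond the relations (1)--(3); that is what makes the argument valid for an arbitrary $S$, not just for combinatorial Rees matrix semigroups with adjoined identity.

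First I would record the identities that the three hypotheses yield. From $sts=s$ one gets $(st)^{2}=(sts)t=st$ and $(ts)^{2}=t(sts)=ts$, so $e:=st$ and $f:=ts$ are idempotents with $es=s=sf$. From $s\n=s$ and $t\n=t$ it follows, by induction on word length, that $\n$ is a right identity on the subsemigroup $\langle s,t\rangle$; hence any product of $s$, $t$, $\n$ that begins with $s$ or $t$ equals the product obtained by deleting every occurrence of $\n$, so $\n$ serves merely as an inert right pad. Finally, since $s$ generates no group we have $s^{j}\neq s$ for all $j\ge 2$, and together with $sts=s$ this forces $st\neq s$, $ts\neq s$, and $s\neq t$ (otherwise $s^{3}=s$). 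The picture is: as soon as a coordinate holds $s^{2}$ it has permanently left the value $s$, whereas $s$ followed by any sequence of ``$t$ then $s$'' blocks (and $\n$'s) returns to $s$; these two phenomena are the only moves the gadget will use.

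For the reduction itself, given $\mathcal{A}_j=(Q_j,\delta_j,q_j^{0},F_j)$ over $\Sigma$, I would introduce one generator $g_\sigma$ per letter $\sigma\in\Sigma$ plus one ``finishing'' generator $g_\star$, and take a power $S^{n}$ whose coordinates are partitioned into blocks, one per automaton, the $j$-th block being indexed by pieces of the states and transitions of $\mathcal{A}_j$. Inside the $j$-th block the entries of the $g_\sigma$ and of $g_\star$ are assembled from $s$, $t$, $\n$ so that, when one reads a candidate product $g_{\sigma_1}\dotsm g_{\sigma_\ell}\,g_\star$: in each coordinate active at the current step the ``$t$ then $s$'' loop advances $\mathcal{A}_j$ by one letter, an inconsistent choice forces $s^{2}$ and so spoils that coordinate forever, and $\n$'s keep the not-yet-relevant coordinates inert (using that $\n$ is a right identity on $\langle s,t\rangle$). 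The target $b$ carries the value $s$ (or $e$) exactly in the coordinates that encode ``$\mathcal{A}_j$ halted accepting'', so that the $j$-th block of a product equals that of $b$ iff the product has the form $w\,g_\star$ with $w\in L(\mathcal{A}_j)$. Reading all blocks together, $b$ is generated iff $\bigcap_j L(\mathcal{A}_j)\neq\emptyset$, which gives the reduction. The ``if'' direction of correctness is a direct evaluation of the per-coordinate products via (1)--(3); the ``only if'' direction additionally uses the normalization above to exclude unintended products of generators, with $s\notin\{s^{2},t,st,ts\}$ being precisely what makes wrong guesses detectable.

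The main obstacle is this gadget. Because multiplication in $S^{n}$ is coordinatewise, information never passes between coordinates, so a single state of $\mathcal{A}_j$ must be spread across many coordinates whose mutual consistency is enforced only implicitly, through the global requirement of hitting $b$; and the gadget may use only products of $s$, $t$, $\n$ that are pinned down by (1)--(3), because in general there is no true zero to lean on and only ``$s^{j}\neq s$ for $j\ge 2$'' plays that role. Building a gadget that is simultaneously faithful --- no spurious accepting words --- and expressible with these scant ingredients, and then pushing the two-directional correctness proof through the ensuing bookkeeping, is where essentially all the difficulty sits; by comparison the \pspace{} upper bound and the preliminary identities are routine.
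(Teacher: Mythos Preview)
Your preliminary identities are all correct and the \pspace{} upper bound is indeed immediate, but the proposal stops precisely where the work begins: you do not construct the gadget, and you concede as much. Naming DFA intersection non-emptiness as the source problem and saying that letters become generators and automata become coordinate blocks is not a reduction; the entire content of the theorem lies in the explicit instance together with a two-directional correctness argument. Two concrete obstacles your sketch leaves open: (i) $\n$ is only a \emph{right} identity on $\langle s,t\rangle$, so a coordinate cannot sit at $\n$ waiting to be ``activated'' --- every coordinate must carry an element of $\langle s,t\rangle$ from the very first factor onward, which forces a dedicated start generator and a mechanism ensuring it appears first; (ii) nothing in your outline forces a generating product to have the shape $g_{\sigma_1}\cdots g_{\sigma_\ell}g_\star$, so the ``only if'' direction has no teeth. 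In the paper these issues are handled by extra control coordinates that make any out-of-order product fall strictly below $s$ in $\mathcal J$, and getting that bookkeeping right is most of the proof.

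The paper's route is also structurally different. It does not attack $S$ directly but first passes to $S\times S$, setting $s':=(s,tst)$, $t':=(tst,s)$, $\n':=(\n,\n)$; these satisfy the symmetric hypotheses $s't's'=s'$, $t's't'=t'$ together with ${s'}^2,{t'}^2\gjless s'$. That symmetry pins down the full partial action of $\{\n',s',t',s't',t's'\}$ on the two ``states'' $s'$ and $s't'$, which is what makes a gadget analysis tractable. Under only the theorem's hypotheses you do not control $st\cdot t$ or $t^2$, so the transition picture your sketch implicitly relies on need not hold in $S$ itself. With the symmetric setup in hand, the paper then carries out an explicit reduction from \dprobfont{Q3SAT} (not DFA intersection): roughly $3n+m+1$ coordinates tracking universal variables, existential variables, an enforced lexicographic sweep over universal assignments, clause satisfaction, and a first-factor marker, followed by a sequence of claims verifying both directions. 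If you wish to push a DFA-intersection reduction through instead, you would still need the $S^2$ symmetrisation (or an equivalent device) and a fully specified coordinate scheme with its accompanying case analysis; as written, the proposal supplies neither.
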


\noindent
In the proof we will reduce quantified \dprobfont{3SAT} to $\smp(S)$.
It follows that adjoining an identity to $B_2$ or $A_2$ already results in a \pspace-complete \smp:
\begin{thm}\label{thm:smp_a21_b21}
The \smp{} for the Brandt monoid $B_2^1$ and for the monoid $A_2^1$ is \pspace-complete.
\end{thm}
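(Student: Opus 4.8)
The plan is to obtain Theorem~\ref{thm:smp_a21_b21} as an immediate consequence of Theorem~\ref{thm:mthm_pspace}. For every finite semigroup the \smp{} lies in \pspace{} (see \cite{BMS2015}), so in particular $\smp(B_2^1)$ and $\smp(A_2^1)$ are in \pspace; it therefore remains only to prove \pspace-hardness. By Theorem~\ref{thm:mthm_pspace} it suffices to exhibit, in each of the two monoids, elements $s,t,\n$ with $sts=s$, with $s$ not generating a group, and with $s\n=s$ and $t\n=t$. So the whole argument reduces to pointing at the right triple in each monoid and checking three short identities.

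For $B_2^1$ recall that $B_2 = S_P$ with $P=\left(\begin{smallmatrix}1&0\\0&1\end{smallmatrix}\right)$, so that $[i,\lambda]\cdot[j,\mu]=[i,\mu]$ if $\lambda=j$ and $0$ otherwise. I would take $s := [2,1]$, $t := [1,2]$, and for $\n$ the adjoined identity $1$ of $B_2^1$. Then $st=[2,2]$ and $sts=[2,2]\cdot[2,1]=[2,1]=s$; moreover $s^2=[2,1]\cdot[2,1]=0$, so $\subuni{s}=\{[2,1],0\}$ is a two-element semigroup with zero and hence not a group; finally $s\cdot1=s$ and $t\cdot1=t$. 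All three hypotheses of Theorem~\ref{thm:mthm_pspace} hold, so $\smp(B_2^1)$ is \pspace-complete.

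For $A_2^1$ recall that $A_2 = S_P$ with $P=\left(\begin{smallmatrix}1&1\\1&0\end{smallmatrix}\right)$, so that $P(\lambda,i)=1$ for all $(\lambda,i)$ except $(2,2)$. Here I would take $s := [2,2]$, $t := [1,1]$, and again $\n$ the identity $1$ of $A_2^1$. Since $P(2,2)=0$ we get $s^2=[2,2]\cdot[2,2]=0$, so $\subuni{s}=\{[2,2],0\}$ is again not a group; since $P(2,1)=1$ and $P(1,2)=1$ we get $st=[2,1]$ and $sts=[2,1]\cdot[2,2]=[2,2]=s$; and $s\cdot1=s$, $t\cdot1=t$. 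Theorem~\ref{thm:mthm_pspace} then yields that $\smp(A_2^1)$ is \pspace-complete.

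There is no real obstacle beyond locating these triples, but it is worth noting why one cannot avoid the adjoined identity: hypothesis~(2) forces $s$ to be a non-idempotent element of $I\times\Lambda$ (equivalently $P(\lambda,i)=0$ for $s=[i,\lambda]$, so that $s^2=0$), hypothesis~(1) then pins down a suitable $t$, and one checks that the resulting $s$ and $t$ have no common right identity inside $B_2$ (respectively $A_2$), so $\n$ must be taken from outside. This matches what one expects from Theorem~\ref{thm:comb_rmsg_dichotomy}, by which $\smp(B_2)$ and $\smp(A_2)$ are merely \np-complete; the jump to \pspace-completeness genuinely uses the adjoined identity in the role of $\n$.
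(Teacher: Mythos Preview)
Your proposal is correct and follows exactly the paper's approach: the paper proves this as Corollary~\ref{clr1_mthm}\tlref{it1_clr1_mthm} by invoking Theorem~\ref{thm:mthm_pspace} with the adjoined identity as $\n$ and the same elements $s=[2,2]$, $t=[1,1]$ for $A_2^1$, and (up to swapping the roles of $s$ and $t$) $s=[1,2]$, $t=[2,1]$ for $B_2^1$. Your additional paragraph explaining why no suitable $\n$ exists inside $B_2$ or $A_2$ is a nice observation that the paper does not spell out.
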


\noindent
This result is part of Corollary~\ref{clr1_mthm}. 
Both $B_2^1$ and $A_2^1$ embed into $\Tsg_3$, the full transformation semigroup on three letters. 
Thus $\smp(\Tsg_3)$ is also \pspace-complete.
So Theorem~\ref{thm:smp_a21_b21} generalizes the result from \cite{BMS2015} that 
$\smp(\Tsg_5)$
is \pspace-complete.
In addition, $B_2$ and $A_2$ are the first groupoids known to have an \np-complete \smp{} where adjoining an identity yields a groupoid with \pspace-complete \smp.
Further examples of semigroups with \pspace-complete \smp{} are listed in Section~\ref{sec:pspace}.

In Section~\ref{sec:rmsg_id} we will consider Rees matrix semigroups with adjoined identity
and prove the following trichotomy result:
\begin{thm}\label{thm:trichotomy_comb_rmsg_id}
Let $S_P$ be a finite combinatorial Rees matrix semigroup.
\begin{enumerate}
\item\label{it1_thm:trichotomy_comb_rmsg_id}
If all entries of the matrix $P$ are $1$, then $\smp(S_P^1)$ is in \ptime.
\item\label{it2_thm:trichotomy_comb_rmsg_id}
If $P$ has one block and some entries are $0$, then $\smp(S_P^1)$ is \np-complete.
\item\label{it3_thm:trichotomy_comb_rmsg_id}
Otherwise $\smp(S_P^1)$ is \pspace-complete.
\end{enumerate}
\end{thm}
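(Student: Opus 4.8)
The plan is to handle the three cases of the trichotomy separately. Membership of $\smp(S_P^1)$ in $\pspace$ is free for every finite semigroup by \cite{BMS2015}, so in case~\ref{it3_thm:trichotomy_comb_rmsg_id} only $\pspace$-hardness needs a proof, and in cases~\ref{it1_thm:trichotomy_comb_rmsg_id} and~\ref{it2_thm:trichotomy_comb_rmsg_id} we must supply the matching upper bounds ($\ptime$, resp.\ $\np$) together with, in case~\ref{it2_thm:trichotomy_comb_rmsg_id}, $\np$-hardness. The hardness parts will be short applications of Theorems~\ref{thm:nphard_condition} and~\ref{thm:mthm_pspace}; the upper bounds in cases~\ref{it1_thm:trichotomy_comb_rmsg_id} and~\ref{it2_thm:trichotomy_comb_rmsg_id} are the substantial part and are obtained from a structural analysis of the subpowers of $S_P^1$ when $P$ has one block, in the spirit of Section~\ref{sec:comb_reesmatrix}.

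For case~\ref{it3_thm:trichotomy_comb_rmsg_id}, the first step is the combinatorial observation that $P$ \emph{fails} to have one block if and only if there exist $i,j\in I$ and $\lambda,\mu\in\Lambda$ with $P(\lambda,i)=0$, $P(\lambda,j)=1$ and $P(\mu,i)=1$. Indeed, if no such quadruple exists, then $J:=\{\,i\in I:P(\mu,i)=1\text{ for some }\mu\,\}$ and $\Delta:=\{\,\lambda\in\Lambda:P(\lambda,j)=1\text{ for some }j\,\}$ witness the one-block property, because the assumption forces $P(\lambda,i)=1$ for every $(\lambda,i)\in\Delta\times J$; the converse direction is immediate. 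Given such a quadruple, put $s:=[i,\lambda]$, $t:=[j,\mu]$, and let $\n:=1$ be the adjoined identity of $S_P^1$. Then $st=[i,\mu]$ since $P(\lambda,j)=1$, whence $sts=[i,\mu][i,\lambda]=[i,\lambda]=s$ since $P(\mu,i)=1$; also $s^2=[i,\lambda][i,\lambda]=0$ since $P(\lambda,i)=0$, so $\langle s\rangle=\{s,0\}$ is a two-element null semigroup, in particular not a group; and $s\n=s$, $t\n=t$ trivially. Hence $s,t,\n$ satisfy the hypotheses of Theorem~\ref{thm:mthm_pspace}, so $\smp(S_P^1)$ is $\pspace$-complete. (For $P=\left(\begin{smallmatrix} 1 & 0 \\ 0 & 1\end{smallmatrix}\right)$ and $P=\left(\begin{smallmatrix} 1 & 1 \\ 1 & 0\end{smallmatrix}\right)$ this re-proves Theorem~\ref{thm:smp_a21_b21}.)

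Next, whenever $P$ has a $0$ entry, say $P(\lambda,i)=0$, put $s:=[i,\lambda]$ and $r:=t:=1$, the adjoined identity; then $rs=s=st$ and, exactly as above, $s^2=0$ shows $s$ does not generate a group, so $\smp(S_P^1)$ is $\np$-hard by Theorem~\ref{thm:nphard_condition}. This gives the hardness in case~\ref{it2_thm:trichotomy_comb_rmsg_id} (and also in case~\ref{it3_thm:trichotomy_comb_rmsg_id}, where it is superseded by $\pspace$-hardness).

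It remains to prove the upper bounds when $P$ has one block, with block $\Delta\times J$. The plan is to characterise when a target $b\in(S_P^1)^n$ lies in $\langle a_1,\dots,a_k\rangle$. Evaluating a product $a_{i_1}\cdots a_{i_m}$ at a coordinate $c$ gives $1$ if every factor is $1$ there; otherwise, with $a_{i_p}(c)$ and $a_{i_q}(c)$ its first and last non-$1$ factors at $c$, it gives $[\,\text{row of }a_{i_p}(c),\ \text{column of }a_{i_q}(c)\,]$ unless a local incompatibility among the factors at $c$ — a factor equal to $0$, or an interior non-$1$ factor whose row lies outside $J$ or whose column lies outside $\Delta$, and so on — forces the value $0$. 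Consequently $b\in\langle a_1,\dots,a_k\rangle$ if and only if there is a word over those generators that are $1$ on $\{c:b(c)=1\}$ such that, at every coordinate $c$ with $b(c)\neq 1$, the first non-$1$ factor carries the prescribed $\mathcal R$-class, the last non-$1$ factor the prescribed $\mathcal L$-class, and a $0$ is forced precisely when $b(c)=0$. When all entries of $P$ equal $1$ no $0$ is ever forced, the ``first'' and ``last'' requirements can each be decided in polynomial time, and case~\ref{it1_thm:trichotomy_comb_rmsg_id} follows. When $P$ has a $0$ entry, the one-block hypothesis keeps the relevant data polynomial: a witnessing word of polynomial length exists, and such a word together with, for each coordinate, the designation of its first and last non-$1$ factors is a certificate verifiable in polynomial time, so $\smp(S_P^1)\in\np$ and case~\ref{it2_thm:trichotomy_comb_rmsg_id} follows. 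I expect the main obstacle to be making this criterion precise and, in case~\ref{it2_thm:trichotomy_comb_rmsg_id}, bounding the length of a witnessing word: one must reconcile coordinates with $b(c)=0$, which demand that a $0$ be forced somewhere, with coordinates where $b(c)\notin\{0,1\}$, which forbid a forced $0$ there, while respecting the ordering constraints among the generators acting as first, resp.\ last, non-$1$ factors — and this is exactly the place where ``one block'', as opposed to an arbitrary $0/1$ matrix, is needed, as the Brandt matrix (Theorem~\ref{thm:smp_a21_b21}) shows.
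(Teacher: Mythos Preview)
Your hardness arguments for \ref{it2_thm:trichotomy_comb_rmsg_id} and \ref{it3_thm:trichotomy_comb_rmsg_id} are correct and coincide with the paper's (up to relabelling of $i,j,\lambda,\mu$); the combinatorial characterisation of ``$P$ does not have one block'' that you give is also the one implicitly used in the paper.

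For the upper bound in \ref{it2_thm:trichotomy_comb_rmsg_id} your sketch points in the right direction and matches the paper's Lemma~\ref{lma:crms_id_in_np}; what is missing is the concrete length bound. The paper's argument is exactly your ``first/last non-$1$ factor'' idea: given $b=a_1\cdots a_k$, for each coordinate $c$ with $b(c)\neq 1$ record at most two indices $\ell_c,r_c\in\rg{k}$ (the first and last non-$1$ factors at $c$ when $b(c)\in I\times\Lambda$, and a \emph{consecutive}-after-deleting-$1$'s pair whose product at $c$ is $0$ when $b(c)=0$), set $N$ to be the union of these indices, and check that $\prod_{j\in N}a_j=b$. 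The one-block hypothesis enters exactly where you expect: for $b(c)\in I\times\Lambda$ every intermediate non-$1$ factor at $c$ lies in $J\times\Delta$, so dropping some of them cannot create a new $0$. This gives $|N|\le 2n$, hence an \np{} certificate.

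For the upper bound in \ref{it1_thm:trichotomy_comb_rmsg_id} there is a genuine gap in your approach, and the paper proceeds quite differently. Your claim that ``the `first' and `last' requirements can each be decided in polynomial time'' does not by itself give a polynomial algorithm: the ordering constraints at different coordinates interact (what serves as the first non-$1$ factor at one coordinate need not be eligible as first at another), and you have not said how to resolve them simultaneously. The paper sidesteps this entirely: when $P$ is the all-$1$ matrix, every element of $S_P^1$ is idempotent, so $S_P^1$ is a band, and one checks directly that it satisfies $xyxzx\approx xyzx$, i.e.\ it is a \emph{regular} band. The \ptime{} result then follows from the cited result \cite[Corollary~1.7]{smpbands} that the \smp{} for every regular band is in \ptime. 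If you want to avoid that citation, you would have to supply an actual polynomial-time decision procedure for the simultaneous first/last ordering constraints, which your proposal does not do.
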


\section{Semigroups with NP-hard SMP}
\label{sec:nphard}

In this section we will prove Theorem~\ref{thm:nphard_condition} by reducing the Boolean satisfiability problem \sat{} to $\smp(S)$.
It follows that the \smp{} for a semigroup $S$ is already \np-hard
if $S$ has a $\gd$-class that contains both group and non-group $\gh$-classes.

We denote Green's equivalences by $\gl,\gr,\gj,\gh,\gd$ \cite[p.~45]{Howie1995}. For the definition of the related preorders $\glle,\grle,\gjle$ see \cite[p.~47]{Howie1995}.
We write $\rg{n}:=\{1,\dots,n\}$ for $n\in\N$ and set $\rg{0} := \varnothing$.
We consider a tuple $a$ in a direct power $S^n$ to be a function $a\colon \rg{n} \rightarrow S$. 
This means the $i$th coordinate of this tuple is denoted by $a(i)$ rather than $a_i$.
The subsemigroup generated by a set $A = \{ a_1,\ldots,a_k \}$ may be denoted by $\subuni{A}$ or $\subuni{a_1,\ldots,a_k}$.

\begin{lma}\label{lma:j_h_classes_1}
Let $s$ belong to a finite semigroup $S$.
Then $s$ generates a group if and only if $s^2\gj s$.
\end{lma}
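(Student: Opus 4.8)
The plan is to prove the two implications separately, using the structure of the cyclic subsemigroup $\subuni{s}=\{s,s^2,s^3,\dots\}$ together with the fact that every finite semigroup is stable (see \cite{Howie1995}). The forward direction is routine: if $\subuni{s}$ is a group, let $e$ be its identity and let $t\in\subuni{s}$ be the inverse of $s$, so that $ts=st=e$ and $es=s$; then $ts^2=(ts)s=es=s$ exhibits $s\in S^1s^2$, hence $s\gjle s^2$, while $s^2\gjle s$ holds trivially because $s^2=s\cdot s$. Thus $s\gj s^2$.

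For the reverse direction, assume $s^2\gj s$. First I would invoke stability: since $s^2=s\cdot s$, the relation $s^2\gj s$ forces $s^2\gr s$, and symmetrically $s^2\gl s$; hence $s$ and $s^2$ lie in a common $\gh$-class $H$. Because $H$ then contains the product $s\cdot s=s^2$ of two of its own elements, the classical criterion for an $\gh$-class to be a subgroup (a consequence of Green's theorem, \cite{Howie1995}) shows that $H$ is a group. Since $S$ is finite, $s$ has finite order in $H$, so $s^k$ is the identity of $H$ for some $k\ge 1$; therefore $\subuni{s}=\{s,s^2,\dots,s^k\}$ equals the cyclic subgroup of $H$ generated by $s$ and is a group.

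Both halves are short, and the only non-formal ingredients are the two classical facts about finite semigroups cited above: stability, used to upgrade $\gj$ to $\gh$, and the fact that an $\gh$-class meeting its own square is a group. I expect the only real care needed to be in attributing these facts correctly; the remainder is direct computation. One could instead prove the reverse implication by working with the eventual periodicity $s^{m+d}=s^m$ of $\subuni{s}$ and ruling out index $m\ge 2$, but transferring the resulting $\gj$-comparisons from $\subuni{s}$ back to $S$ would again rely on stability, so the argument above seems cleanest.
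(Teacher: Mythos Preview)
Your proof is correct, but the reverse implication follows a different path from the paper's. The paper argues via principal factors: if $J_s$ is the minimal ideal it is simple, otherwise $J(s)/I(s)$ is $0$-simple (not null, because $s^2\gj s$), and in either case the Rees Theorem for finite (0\nobreakdash-)simple semigroups shows that $s$ lies in a group $\gh$-class, so $\subuni{s}$ is a group. You instead invoke stability to upgrade $s^2\gj s$ to $s^2\gh s$, and then apply the criterion that an $\gh$-class meeting its own square is a group. Your route is shorter and avoids the principal-factor machinery and the Rees Theorem entirely; the paper's route fits more naturally with the later Rees-matrix setting but is heavier for this lemma. The forward directions are essentially the same (the paper phrases it as $s^k=s$ for some $k\ge 2$, which immediately gives $s\gjle s^2$).
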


\begin{proof}
If $s$ generates a group, then $s^k=s$ for some $k\ge2$. Thus $s^2\gj s$.

For the converse let $s^2 \gj s$.
First assume the $\gj$-class $J_s$ is the minimal ideal of $S$. 
Then $J_s$ is a finite simple semigroup by \cite[Proposition 3.1.4]{Howie1995}. 
By the Rees Theorem for finite simple semigroups $s$ generates a group. 

Now assume $J_s$ is not the minimal ideal of $S$.
Let
\begin{align*}
J(s) &:= \{r\in S \mid r \gjle s \}, %:= S^1sS^1 % 
\quad I(s) := \{r\in S \mid r \gjless s \}.
\end{align*}
By \cite[Proposition 3.1.4]{Howie1995} the \emph{principal factor} ${J(s)}/{I(s)}$ is either null or $0$-simple. 
Since $s^2 \gj s$, the second case applies.
By the Rees Theorem for finite $0$-simple semigroups $s$ generates a group.
\end{proof}

\begin{lma}\label{lma:hardness_jclass_argument}
Let $r,s,t$ be elements of a finite semigroup $S$ such that $s$ does not generate a group and $rs = st = s$. Then there are idempotents $e,f \in S$ such that $es = sf = s$ and every product $a_1 \cdots a_k$ in $s,e,f$ in which $s$ occurs at least twice does not yield $s$.
\end{lma}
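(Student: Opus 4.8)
The plan is to pass to Green's relations and distinguish whether $s$ is \emph{regular}, i.e.\ whether $s\in sSs$. By \cite[Proposition 3.1.4]{Howie1995} the principal factor $J(s)/I(s)$ is null precisely when $s$ is not regular and $0$-simple precisely when it is. In either case I would first exhibit the required idempotents $e,f$ with $es=sf=s$, and then prove the stronger statement that \emph{every} product in $s,e,f$ containing at least two occurrences of $s$ is $\gjless s$. Such a product, after displaying its first two occurrences of $s$, factors as $u\,(svs)\,u'$ with $u,u'\in S^1$ and $v\in\langle e,f\rangle\cup\{1\}$ (the block between the two $s$'s contains none of them), so it suffices to prove $svs\gjless s$.

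If $s$ is not regular, the principal factor is null, so $xy\gjless s$ whenever $x,y\gjle s$ and $x\gj s$ or $y\gj s$. Here I would simply take the idempotent powers $e:=r^{\omega}$ and $f:=t^{\omega}$; from $rs=s$ and $st=s$ we get $es=s$ and $sf=s$. Since $sv=s\cdot v\gjle s$, we conclude $svs=(sv)s\gjless s$ irrespective of whether $sv\gj s$ or $sv\gjless s$, which settles this case.

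If $s$ is regular, fix $a$ with $sas=s$ and set $e:=sa$ and $f:=as$. A short computation shows these are idempotents ($e^2=sasa=sa=e$, likewise $f^2=f$) with $es=sas=s$, $sf=sas=s$, and moreover $e\gr s$ and $f\gl s$ (e.g.\ $e=sa\in sS^1$ and $s=(sa)s=es\in eS^1$), so $e,f\in J_s$. Since $s$ generates no group, $s^2\gjless s$ by Lemma~\ref{lma:j_h_classes_1}; as $J_s$ is regular, $H_s$ is not a group and hence contains no idempotent. Now suppose, for contradiction, that some product $a_1\cdots a_k$ in $s,e,f$ with at least two $s$'s equalled $s$; writing it as $x\,(sws)\,y$ with $x,y\in S^1$ and $w\in\langle e,f\rangle\cup\{1\}$ (the block between the first two $s$'s), the identity $s=x(sws)y$ forces $sws\gj s$. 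Using $sf=s$ we may delete any leading $f$'s of $w$ without changing $sws$; if this leaves $w=1$ then $sws=s^2\gjless s$, a contradiction, so $w$ begins with $e$, say $w=ew'$. From $sw\gjle s$ and $sws=(sw)s\gjle sw$ we get $sw\gj s$; since $w\in\langle e,f\rangle$ and $e,f\in J_s$ we have $w\gjle e\gj s$, while $w\gjge sw\gj s$, so $w\gj s$; then $w=ew'$ together with stability of finite semigroups ($e\gj ew'$ implies $e\gr ew'$) gives $e\gr w$, whence $R_w=R_s$. Applying stability to $sw=s\cdot w$ gives $s\gr sw$ and $w\gl sw$, so $sw\in R_s\cap L_w$; by the Clifford--Miller location lemma this forces the $\gh$-class $L_s\cap R_w=L_s\cap R_s=H_s$ to contain an idempotent, contradicting that $H_s$ is idempotent-free. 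Hence no such product exists.

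The routine ingredients are the verification that $sa,as$ are the desired idempotents and the factorisation of an arbitrary product in the form $u\,(svs)\,u'$; the non-regular case is then essentially free. The crux is the regular case, and the key point is twofold: one must choose the idempotents \emph{inside} $J_s$ (the naive $r^{\omega},t^{\omega}$ need not be $\gj$-equivalent to $s$, which would break the stability argument), and one must extract, from the single relation $sws\gj s$, enough information — via stability and the location lemma — to contradict the absence of idempotents in $H_s$.
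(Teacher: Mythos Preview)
Your proof is correct, and both you and the paper split into the same regular/non-regular cases with the same choice of idempotents; the non-regular case is argued identically. In the regular case, however, the paper takes a much more elementary route: since $e=sa$ literally has $s$ as a left factor, one has $se=s\cdot sa=s^2a$, so after absorbing any leading $f$'s via $sf=s$ the segment $s\,w$ starting at the first $s$ up to and including the first non-$f$ letter is either $s^2$ or $se=s^2a$ --- in either case it already sits $\gjle s^2\gjless s$, and the whole product drops below $J_s$. Your route instead leverages stability and the Clifford--Miller location lemma to force an idempotent into $H_s$. Both arguments are valid; the paper's is shorter and uses nothing beyond $s^2\gjless s$, while yours is more structural and would generalise to settings where one only knows $e\gr s$ and $f\gl s$ without having an explicit factorisation $e=s\cdot(\text{something})$.
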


\begin{proof}
First assume $s$ is regular, i.e.\ $sus=s$ for some $u\in S$.
Let $e$ and $f$ be the idempotent powers of $su$ and $us$ respectively.
Clearly $es=sf=s$.
Let $a_1 \cdots a_k$ be a product in $s,e,f$, and $i < j$ such that $a_i=a_j=s$.
Let $\ell \in \{i+1,\ldots,j\}$ be maximal such that $a_{i+1}=\ldots=a_{\ell-1}=f$.
Then $a_i \cdots a_{\ell-1} = s$, and thus $a_i \cdots a_\ell \in \{s^2,se\}$.
Note that $se=s(su)^m$ for some $m\in\N$. 
Now a factor $s^2$ occurs in the product $a_i \cdots a_\ell$.
Since $s$ does not generate a group, Lemma~\ref{lma:j_h_classes_1} implies that $s^2\gjless s$. 
Thus $a_1 \cdots a_\ell \gjless s$, and the result follows.

Now assume $s$ is not regular. 
By \cite[Theorem 3.1.6]{Howie1995} the principal factor $J(s)/I(s)$ is null.
Let $e$ and $f$ be the idempotent powers of $r$ and $t$ respectively.
Let $a_1 \cdots a_k$ be a product in $s,e,f$, and let $i < j$ such that $a_i=a_j=s$.
Then $a_1\cdots a_i \gjle s$ and $a_{i+1}\cdots a_k \gjle s$. Since $J(s)/I(s)$ is null, it follows that $a_1\cdots a_k \gjless s$.
\end{proof}

\begin{proof}[Proof of Theorem~\ref{thm:nphard_condition}]
Let $S$ satisfy the assumptions.
We reduce the Boolean satisfiability problem \sat{} to $\smp(S)$.
\sat{} is \np-complete \cite{Cook1971}, and we give the following definition:
\dproblem{\sat}{Clauses $C_1, \ldots, C_m \subseteq \{x_1, \ldots, x_k, \neg x_1, \ldots, \neg x_k\}$.}
{Do truth values for $x_1, \ldots, x_k$ exist for which the Boolean formula 
$\Phi( x_1, \ldots, x_k ) := (\bigvee C_1) \wedge\ldots\wedge (\bigvee C_m)$ 
is true?}

\noindent
For all $j\in\rg{k}$ we may assume that $x_j$ or $\neg x_j$ occurs in some clause $C_i$.
We define an $\smp(S)$ instance
\begin{equation*}\label{eq_10_thm:nphard_condition} 
A := \{a_1^0,\ldots,a_k^0,a_1^1,\ldots,a_k^1\} \subseteq S^{k+m},\ b \in S^{k+m}.
\end{equation*}
Let $e,f\in S$ be idempotents with the properties from Lemma~\ref{lma:hardness_jclass_argument}.
Let $g$ be the idempotent power of $se$. 
Observe that $e$ and $g$ form a two-element semilattice with $g < e$.

Let $j \in \rg{k}$ and $z \in \{ 0, 1 \}$. 
For $i\in\rg{k}$ let
\begin{equation*}
\phantom{a_j^{1}(k+i)}%
\llap{$a_j^{z}(i)$} := 
\rlap{$\begin{cases}
f & \text{if } i < j, \\
s & \text{if } i = j, \\
e & \text{if } i > j,
\end{cases}$}
\phantom{\begin{cases}
g & \text{if } \neg x_j \in C_i, \\
e & \text{otherwise,}
\end{cases}}
\end{equation*}
and for $i\in\rg{m}$ let
\begin{align*}
a_j^0(k+i) &:= 
\begin{cases}
g & \text{if } \neg x_j \in C_i, \\
e & \text{otherwise,}
\end{cases}
\\
a_j^1(k+i) &:= 
\begin{cases}
g & \text{if } x_j \in C_i, \\
e & \text{otherwise.}
\end{cases}
\end{align*}
Let
\begin{align*}\begin{aligned}
b(i)  &:= s & \text{for } & i\in\rg{k}, \\
b(k+i) &:= g & \text{for } & i\in\rg{m}.
\end{aligned}\end{align*}
We claim that
\begin{equation}\label{eq2_thm:nphard_condition}
\text{the Boolean formula $\Phi$ is satisfiable if and only if $b\in\subuni A$.}
\end{equation}

\noindent
For the $(\Rightarrow)$ direction let $z_1, \ldots, z_k \in \{ 0,1 \}$ 
such that $\Phi(z_1,\ldots,z_k)=1$.
We show that 
\begin{equation}\label{eq1_thm:nphard_condition}
b=a_1^{z_1} \cdots a_k^{z_k}.
\end{equation}
For $i\in\rg{k}$ we have $a_1^{z_1} \cdots a_k^{z_k} (i) = {e}^{i-1} s {f}^{k-i} = s = b(i)$.
For $i\in\rg{m}$ the clause $\bigvee C_i$ is satisfied under the assignment $x_1\mapsto z_1,\ldots,x_k\mapsto z_k$.
Thus there is a $j\in\rg{k}$ such that $x_j\in C_i$ and $z_j = 1$, or $\neg x_j \in C_i$ and $z_j = 0$.
In both cases $a_j^{z_j}(k+i) = g$, and thus $a_1^{z_1} \cdots a_k^{z_k}(k+i) = g = b(k+i)$.
This proves \eqref{eq1_thm:nphard_condition} and the $(\Rightarrow)$ direction of \eqref{eq2_thm:nphard_condition}.

For the $(\Leftarrow)$ direction of \eqref{eq2_thm:nphard_condition} assume $b = a_{j_1}^{z_1} \cdots a_{j_\ell}^{z_\ell}$ for some $\ell\in\N$, $j_1,\ldots,j_\ell \in \rg{k}$, and $z_1,\ldots,z_\ell\in\{0,1\}$.
We show that $j_1, \ldots, j_\ell$ are distinct. 
Suppose $j_p$ = $j_q$ for $p < q$. 
The factors of the product $a_{j_1}^{z_1}\cdots a_{j_\ell}^{z_\ell}(j_p)$ are given by $s,e,f$. The factor $s$ occurs at least twice since $a_{j_p}(j_p)=a_{j_q}(j_p)=s$.
By Lemma~\ref{lma:hardness_jclass_argument} this product does not yield $s$, contradicting our assumption.
We define an assignment 
\begin{align*}
\theta \colon x_{j_1}&\mapsto z_1,\ldots,x_{j_\ell}\mapsto z_\ell, \\
x_j &\mapsto 0 \quad\text{for } j \in \rg k \setminus \{ j_1, \ldots, j_\ell \},
\end{align*}
and show that $\theta$ satisfies the formula $\Phi$.
Let $i \in \rg m$. 
Since $a_{j_1}^{z_1} \ldots a_{j_\ell}^{z_\ell}(k+i)$ 
is a product in $e,g$ that yields $g$, some factor $a_{j_p}^{z_p}(k+i)$ must be $g$. 
From the definition of $a_{j_p}^{z_p}$ we see that either
$z_p = 0$ and $\neg x_{j_p}\in C_i$, or $z_p = 1$ and $x_{j_p} \in C_i$.
This means the formula $\bigvee C_i$ is satisfied under the assignment $\theta$.
Since $i$ was arbitrary, $\Phi$ is also satisfied.
The equivalence \eqref{eq2_thm:nphard_condition} and the theorem are proved.
\end{proof}

\begin{clr}
\label{clr:hardness_jclass_argument}
If a $\gj$-class of a finite semigroup $S$ contains both group and non-group $\gh$-classes, then $\smp(S)$ is \np-hard.
\end{clr}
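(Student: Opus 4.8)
The plan is to deduce this from Theorem~\ref{thm:nphard_condition}, so I need to produce elements $r,s,t\in S$ with $rs=st=s$ and $s$ not generating a group. First I would fix a $\gj$-class $J$ of $S$ containing both a group $\gh$-class and a non-group $\gh$-class. Since a group $\gh$-class contains an idempotent, $J$ is regular, and as $S$ is finite we have $\gj=\gd$; hence by the egg-box structure of a regular $\gd$-class, every $\gr$-class and every $\gl$-class inside $J$ contains an idempotent \cite[Proposition 2.3.2]{Howie1995}.

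Next I would pick $s$ in a non-group $\gh$-class of $J$ and verify that $s$ does not generate a group. If $\subuni s$ were a group, its identity element would be a power $e=s^m$ of $s$, whence $es=se=s$, and the inverse $s'\in\subuni s$ of $s$ would satisfy $ss'=s's=e$; then $s=es\in eS^1$ and $e=ss'\in sS^1$ give $sS^1=eS^1$, i.e.\ $s\gr e$, and symmetrically $s\gl e$, so $H_s=H_e$ would be a group, contradicting the choice of $s$. (Alternatively, $\subuni s$ being a group forces $s^2\gj s$ by Lemma~\ref{lma:j_h_classes_1}, whereas $s^2\gjless s$ for any $s$ lying in a non-group $\gh$-class of a regular $\gd$-class.)

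Finally, using the first paragraph, I would take $r$ to be an idempotent of the $\gr$-class $R_s$ and $t$ an idempotent of the $\gl$-class $L_s$. Since $r$ is an idempotent with $r\gr s$ we have $rs=s$, and since $t$ is an idempotent with $t\gl s$ we have $st=s$. Thus the triple $(r,s,t)$ meets all hypotheses of Theorem~\ref{thm:nphard_condition}, and therefore $\smp(S)$ is \np-hard.

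The only delicate point is the verification that an element of a non-group $\gh$-class cannot generate a group; everything else is a routine application of the structure theory of regular $\gd$-classes together with Theorem~\ref{thm:nphard_condition}, so I do not expect a real obstacle here.
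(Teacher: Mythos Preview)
Your argument is correct and is essentially the paper's own proof: the paper picks $s$ in a non-group $\gh$-class, notes that $s$ is regular since its $\gj$-class ($=\gd$-class) contains an idempotent, takes $u$ with $sus=s$, and sets $r:=su$, $t:=us$---which are precisely idempotents in $R_s$ and $L_s$, so your choice of $r,t$ via the egg-box picture amounts to the same thing. The only cosmetic difference is that the paper invokes Green's Theorem \cite[Theorem 2.2.5]{Howie1995} for the fact that $s$ does not generate a group, while you spell this out directly.
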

\begin{proof}
Let $s\in S$ such that $H_s$ is not a group and $J_s$ contains group $\gh$-classes.
From Green's Theorem \cite[Theorem 2.2.5]{Howie1995} we know that $s$ does not generate a group.
Since $J_s$ contains an idempotent and $J_s=D_s$, the element $s$ is regular by~
\cite[Proposition 2.3.1]{Howie1995}. % p. 51
That is, there is a $u\in S$ such that $sus=s$. 
Now $su$, $s$, and $us$ fulfill the hypothesis of Theorem~\ref{thm:nphard_condition}.
\end{proof}

\section{Combinatorial Rees matrix semigroups}
\label{sec:comb_reesmatrix}

In this section we will establish
a \ptime/\np-complete dichotomy for the \smp{} for combinatorial Rees matrix semigroups by proving Theorem~\ref{thm:comb_rmsg_dichotomy}.
After that we apply this result to combinatorial $0$-simple semigroups.

Combinatorial Rees matrix semigroups have the following property:

\begin{lma}[cf. {\cite[Lemma 2.2]{SS2006}}]\label{lma:combrmsg1}
Let $k\ge 2$ and $a_1,\ldots,a_k$ be elements of a combinatorial Rees matrix semigroup $S_P$. %Then:
\begin{enumerate}
\item\label{it1_lma:combrmsg1}
We have $a_1 \cdots a_k = 0$ if and only if $a_j a_{j+1}=0$ for some $j\in\rg{k-1}$.
\item\label{it2_lma:combrmsg1}
If $a_1 \cdots a_k \neq 0$, then there are $i,j \in I$ and $\lambda,\mu \in \Lambda$ such that $a_1=[i,\lambda]$, $a_k=[j,\mu]$, and $a_1 \cdots a_k = [i,\mu]$. 
\end{enumerate}
\end{lma}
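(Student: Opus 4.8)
The plan is to work directly from the multiplication table of $S_P$, exploiting the fact that the only way a product can fail to be nonzero is when two consecutive factors multiply to $0$. I would prove part~\ref{it1_lma:combrmsg1} first and then deduce part~\ref{it2_lma:combrmsg1} as a byproduct of the same inductive analysis.

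For part~\ref{it1_lma:combrmsg1}, the $(\Leftarrow)$ direction is immediate: if $a_j a_{j+1} = 0$ then $a_1 \cdots a_k = (a_1\cdots a_{j-1})\cdot 0\cdot(a_{j+2}\cdots a_k) = 0$ since $0$ is a zero element. For $(\Rightarrow)$ I would argue by contraposition and induction on $k$. Suppose $a_j a_{j+1}\neq 0$ for all $j\in\rg{k-1}$; I claim $a_1\cdots a_k\neq 0$. In particular no $a_j$ equals $0$ (else an adjacent product would be $0$, using that $S_P$ has at least two factors to form such an adjacent pair — here one must treat $k=2$ as the base case and note that for $k\ge 2$ every $a_j$ has a neighbour). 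So write $a_j = [i_j,\lambda_j]$ for each $j$. The condition $a_j a_{j+1}\neq 0$ says exactly $P(\lambda_j, i_{j+1}) = 1$, and then $a_j a_{j+1} = [i_j,\lambda_{j+1}]$. Now induct: $a_1 a_2 = [i_1,\lambda_2]$, and inductively $a_1\cdots a_{j} = [i_1,\lambda_{j}]$; since $P(\lambda_{j}, i_{j+1}) = P(\lambda_j,i_{j+1}) = 1$ (the first coordinate of $a_1\cdots a_j$ is irrelevant to the product rule, only $\lambda_j$ and $i_{j+1}$ matter, and the hypothesis $a_j a_{j+1}\neq 0$ gives $P(\lambda_j,i_{j+1})=1$), we get $a_1\cdots a_{j+1} = [i_1,\lambda_{j+1}]\neq 0$. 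This completes the induction and proves the contrapositive.

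Part~\ref{it2_lma:combrmsg1} now falls out of the computation just performed: if $a_1\cdots a_k\neq 0$, then by part~\ref{it1_lma:combrmsg1} no adjacent product vanishes, so as above each $a_j=[i_j,\lambda_j]$ and the telescoping computation gives $a_1\cdots a_k = [i_1,\lambda_k]$. Setting $i:=i_1$, $\lambda:=\lambda_1$, $j:=i_k$, $\mu:=\lambda_k$ yields $a_1=[i,\lambda]$, $a_k=[j,\mu]$, and $a_1\cdots a_k=[i,\mu]$, as required.

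I do not anticipate a serious obstacle here; the only point requiring a little care is getting the induction bookkeeping right — specifically, observing that the product rule $[p,\nu]\cdot[q,\rho]$ depends only on $\nu$ and $q$, so that the accumulated left-hand prefix $a_1\cdots a_j = [i_1,\lambda_j]$ interacts with $a_{j+1}$ through the same matrix entry $P(\lambda_j,i_{j+1})$ that governs $a_j a_{j+1}$. Handling $k=2$ as an explicit base case (where the statements are essentially the definition of the multiplication) keeps the induction clean.
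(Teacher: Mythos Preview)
Your proposal is correct and is exactly the straightforward verification the paper has in mind; the paper's own proof reads simply ``Straightforward.'' Your inductive bookkeeping (that the prefix $a_1\cdots a_j=[i_1,\lambda_j]$ meets $a_{j+1}$ through the same matrix entry $P(\lambda_j,i_{j+1})$ governing $a_ja_{j+1}$) is the only point with any content, and you have handled it cleanly.
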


\begin{proof}Straightforward.\end{proof}

The next two results will allow us to show that the \smp{} for a combinatorial Rees matrix semigroup is in \np.

\begin{lma}[cf. {\cite[Theorem 4.3]{SS2006}}]\label{lma:combrmsg2}
Let $f := y_1\cdots y_k$ and $g := z_1 \cdots z_\ell$ be words over an alphabet $X$ such that
\begin{enumerate}
\item\label{it1_lma:combrmsg2} 
$\{ y_iy_{i+1} \sst i\in\rg{k-1} \} = \{ z_jz_{j+1} \sst j\in\rg{\ell-1} \}$,
\item\label{it2_lma:combrmsg2} 
$y_1 = z_1$ and $y_k = z_\ell$.
\end{enumerate}
Then every combinatorial Rees matrix semigroup $S_P$ satisfies $f \approx g$.
\end{lma}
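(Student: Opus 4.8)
The plan is to fix an arbitrary map $\phi\colon X \to S_P$, extend it to nonempty words by $\phi(w_1\cdots w_n) := \phi(w_1)\cdots\phi(w_n)$ in $S_P$, and prove $\phi(f) = \phi(g)$; since $\phi$ is arbitrary, this shows $S_P$ satisfies $f \approx g$. First I would dispose of the degenerate case: if $k = 1$, then hypothesis~\ref{it1_lma:combrmsg2} reads $\varnothing = \{\, z_j z_{j+1} \sst j \in \rg{\ell-1} \,\}$, forcing $\ell = 1$, and then \ref{it2_lma:combrmsg2} gives $y_1 = z_1$, so $f$ and $g$ are literally the same word; the case $\ell = 1$ is symmetric. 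So from now on I assume $k, \ell \ge 2$, which is exactly what is needed to invoke Lemma~\ref{lma:combrmsg1}.

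Next I would handle the case where a product vanishes. By Lemma~\ref{lma:combrmsg1}\ref{it1_lma:combrmsg1}, $\phi(f) = 0$ if and only if $\phi(y_i)\phi(y_{i+1}) = 0$ for some $i \in \rg{k-1}$, i.e.\ if and only if $0$ lies in the set $\{\, \phi(y_i y_{i+1}) \sst i \in \rg{k-1} \,\}$. By hypothesis~\ref{it1_lma:combrmsg2} this set equals $\{\, \phi(z_j z_{j+1}) \sst j \in \rg{\ell-1} \,\}$, so the same criterion shows $\phi(f) = 0$ if and only if $\phi(g) = 0$. In particular, if either product is $0$, then so is the other, and we are done in that case.

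Finally, assume $\phi(f) \ne 0$; by the previous paragraph also $\phi(g) \ne 0$. Lemma~\ref{lma:combrmsg1}\ref{it2_lma:combrmsg1} then yields $i \in I$ and $\mu \in \Lambda$ with $\phi(f) = [i,\mu]$, where the first coordinate of $\phi(y_1)$ is $i$ and the second coordinate of $\phi(y_k)$ is $\mu$; likewise $\phi(g) = [i',\mu']$, where $\phi(z_1)$ has first coordinate $i'$ and $\phi(z_\ell)$ has second coordinate $\mu'$. From $y_1 = z_1$ we get $\phi(y_1) = \phi(z_1)$, hence $i = i'$, and from $y_k = z_\ell$ we get $\phi(y_k) = \phi(z_\ell)$, hence $\mu = \mu'$; therefore $\phi(f) = [i,\mu] = [i',\mu'] = \phi(g)$. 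I do not expect a genuine obstacle here: Lemma~\ref{lma:combrmsg1} does all the structural work, and the only point requiring care is that \ref{it1_lma:combrmsg2} is an equality of \emph{sets} of length-two factors, so multiplicities play no role — which is precisely what makes the equivalence $\phi(f) = 0 \iff \phi(g) = 0$ go through cleanly.
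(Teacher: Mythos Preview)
Your proof is correct and follows essentially the same route as the paper: both use Lemma~\ref{lma:combrmsg1}\ref{it1_lma:combrmsg1} together with hypothesis~\ref{it1_lma:combrmsg2} to show the two products vanish simultaneously, and then Lemma~\ref{lma:combrmsg1}\ref{it2_lma:combrmsg1} together with hypothesis~\ref{it2_lma:combrmsg2} to match the nonzero values. The only difference is that you explicitly dispose of the degenerate single-letter case before invoking Lemma~\ref{lma:combrmsg1}, which the paper leaves implicit.
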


\begin{proof}
Let $S_P$ be a combinatorial Rees matrix semigroup, and let
$\alpha \colon X^+ \to S_P$ be a homomorphism from the free semigroup over $X$ to $S_P$. 
By item~\lref{it1_lma:combrmsg2} we have $\{y_1,\ldots,y_k\}=\{z_1,\ldots,z_\ell\}$.
We claim that 
\begin{equation}\label{eq1_lma:combrmsg2}
\alpha(y_1 \cdots y_k) = 0 \quad\text{if and only if}\quad \alpha(z_1 \cdots z_\ell)=0.
\end{equation}
Assume $\alpha(y_1 \cdots y_k)=0$.
Then $\alpha(y_i)\alpha(y_{i+1})=0$ for some $i\in\rg{k-1}$ by Lemma~\ref{lma:combrmsg1}\tlref{it1_lma:combrmsg1}.
By item \lref{it1_lma:combrmsg2} 
$y_iy_{i+1}=z_jz_{j+1}$ for some $j\in\rg{\ell-1}$. Thus $\alpha(z_j)\alpha(z_{j+1})=0$, and hence
$\alpha(z_1 \cdots z_\ell)=0$. This proves \eqref{eq1_lma:combrmsg2}.

If $\alpha(y_1 \cdots y_k) = 0$, then $\alpha(y_1 \cdots y_k) = \alpha(z_1 \cdots z_\ell)$ by \eqref{eq1_lma:combrmsg2}.
Assume $\alpha(y_1 \cdots y_k) \neq 0$. Then also $\alpha(z_1 \cdots z_\ell) \neq 0$, and 
Lemma~\ref{lma:combrmsg1}\tlref{it2_lma:combrmsg1} implies
\[ \alpha(y_1)\cdots\alpha(y_k) = \alpha(z_1)\cdots\alpha(z_\ell). \]
This proves the lemma.
\end{proof}

\begin{lma}\label{lma:combrmsg3}
Let $f$ be a word over $x_1,\ldots,x_k$.
Then there is a word $g$ such that 
\begin{enumerate}
\item\label{it1_lma:combrmsg3} the length of $g$ is at most $k(k^2+1)$, and
\item\label{it2_lma:combrmsg3} every combinatorial Rees matrix semigroup satisfies $f \approx g$.
\end{enumerate}
\end{lma}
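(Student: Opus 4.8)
The plan is to translate the statement into a question about walks in a digraph and then invoke Lemma~\ref{lma:combrmsg2}. Write $f = c_1 c_2 \cdots c_n$ with each $c_i\in\{x_1,\ldots,x_k\}$; if $n\le 1$ we may take $g:=f$, so assume $n\ge 2$. Let $G$ be the directed graph whose vertex set is the set of letters occurring in $f$ and whose edge set is $E:=\{(c_i,c_{i+1}) : 1\le i\le n-1\}$, so $|E|\le k^2$. Then $f$ is literally the sequence of vertices of a walk in $G$ that starts at $a:=c_1$, ends at $b:=c_n$, and traverses every edge of $E$; conversely, by Lemma~\ref{lma:combrmsg2}, the vertex sequence of \emph{any} such walk is a word equal to $f$ in every combinatorial Rees matrix semigroup. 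So it is enough to exhibit one such walk of length at most $k(k^2+1)$.

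To produce a short covering walk, I would enumerate $E=\{e_1,\ldots,e_r\}$ in order of first occurrence in $f$, say $e_j$ first occurs starting at position $t_j$, so that $1=t_1<t_2<\cdots<t_r\le n-1$ and $e_1=(c_1,c_2)$. For $1\le j<r$ the infix $c_{t_j+1}\cdots c_{t_{j+1}}$ of $f$ is a walk in $G$ from the head of $e_j$ to the tail of $e_{j+1}$, and $c_{t_r+1}\cdots c_n$ is a walk in $G$ from the head of $e_r$ to $b$; replace each of these $r$ connecting walks by a \emph{shortest} walk with the same endpoints. Such a shortest walk exists (the infix witnesses reachability) and, being simple, has at most $k-1$ edges. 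Now let $W$ be the concatenation $e_1, P_1, e_2, P_2, \ldots, e_r, P_r$, where $P_j$ is the $j$-th shortened connecting walk; this is a walk in $G$ that begins at the tail of $e_1$, namely $c_1=a$, ends at $b$, and traverses every $e_j$ and hence every edge of $E$. The number of edges of $W$ is at most $r+r(k-1)=rk\le k^3$, so $W$ has at most $k^3+1\le k(k^2+1)$ vertices.

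Finally, letting $g$ be the vertex sequence of $W$, I would verify the two hypotheses of Lemma~\ref{lma:combrmsg2} for the pair $(f,g)$: every consecutive pair of letters of $g$ is an edge of $G$ and hence lies in $E$, while every edge of $E$ occurs along $W$, so $f$ and $g$ have the same set of consecutive pairs; and $g$ starts with $a$ and ends with $b$, just as $f$ does. Thus $f\approx g$ in every combinatorial Rees matrix semigroup, and $|g|\le k(k^2+1)$, which gives both items \lref{it1_lma:combrmsg3} and \lref{it2_lma:combrmsg3}.

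The length estimate and the bookkeeping for degenerate connecting walks (when $t_{j+1}=t_j+1$, or when $t_r=n-1$, the relevant $P_j$ is the empty walk) are routine. The step that genuinely uses the setup is the existence of the connecting walks: it relies on the fact that consecutive ``new'' edges of $f$, and the last new edge together with $b$, are joined by honest infixes of $f$, which live inside $G$ and can therefore be contracted to short paths — this is the heart of the argument.
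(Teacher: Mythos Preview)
Your argument is correct, and it reaches the goal via the same key reduction as the paper---namely, Lemma~\ref{lma:combrmsg2}---but the construction of the short word $g$ is organised differently. The paper does not pass explicitly through a digraph: instead it fixes one letter $x_i$, splits $f$ at the occurrences of $x_i$ into blocks $v_1,\ldots,v_{m+1}$, and drops every intermediate block $v_r$ whose two-letter subwords already appear earlier; this caps the number of surviving blocks (hence the number of occurrences of $x_i$) by $k^2+1$, and one then repeats for each letter in turn, using that deletion of blocks can only decrease the count of other letters. Your route is a single pass: list the edges of the associated digraph in order of first appearance and splice them together with shortest paths, giving at most $k^3+1\le k(k^2+1)$ vertices in one stroke. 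The graph formulation makes the role of Lemma~\ref{lma:combrmsg2} transparent (it is exactly the statement that two edge-covering walks with the same endpoints induce the same identity) and avoids the mild bookkeeping of iterating over the alphabet; the paper's block-deletion viewpoint, on the other hand, stays closer to the word itself and produces $g$ as an explicit subsequence of subwords of $f$.
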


\begin{proof}
Let $f=y_1 \cdots y_\ell$ for $y_1,\ldots,y_\ell \in \{x_1,\ldots,x_k\}$. 
We show that there is a word $g$ 
such that item~\lref{it2_lma:combrmsg3} holds and
in which each variable $x_i$ occurs at most $k^2+1$ times.
Fix $i\in\rg{k}$. 
Let $j_1,\ldots,j_m \in \rg{\ell}$ be the positions of $x_i$ in $y_1\cdots y_\ell$.
Let
\begin{align*}\begin{aligned}
v_1 &:= y_1 \cdots y_{j_1}, && \\
v_r &:= y_{j_{r-1}+1}\cdots y_{j_r} &&\text{for } r \in \{2,\ldots,m\}, \\
v_{m+1} &:= y_{j_m+1}\cdots y_\ell. &&
\end{aligned}\end{align*}
Note that $f = v_1\cdots v_{m+1}$.
Now for every word $h := z_1\cdots z_n$ over $x_1,\ldots,x_k$ let
\[ E(h) := \{ z_jz_{j+1} \sst j \in \rg{n-1} \}. \]
It is not hard to see that 
\[\text{$E(v_1\cdots v_r)=E(v_1)\cup E(x_iv_2) \cup \ldots \cup E(x_iv_r)$ \quad{}for $r\in\{2,\ldots,m+1\}$}.\]
We define \[ R := \{ r\in\{2,\ldots,m\} \sst E(x_iv_r) \not \subseteq E(v_1\cdots v_{r-1})\} \]
and let
\[ g := v_1 ( \prod_{\mathclap{r \in R}} v_r ) v_{m+1}.\]
Apparently $g$ is a concatenation of subwords of $f$, and $f$ and $g$ start with the same letter.
We show that 
\begin{equation}\label{eq:blafoo}
\text{$f$ and $g$ also end with the same letter.}
\end{equation}
If $v_{m+1}$ is nonempty, then \eqref{eq:blafoo} is clear.
If $v_{m+1}$ is empty, then $y_\ell=x_i$, and $g$ ends with a subword $v_r$ for some $r\in\rg{m}$.
Since $v_r$ and $f$ both end with $x_i$, \eqref{eq:blafoo} is proved.
We have 
\begin{align*} 
E(f) = E(v_1\cdots v_{m+1}) 
&= E(v_1) \cup \bigcup_{\mathclap{r=2}}^m E(x_iv_r) \cup E(x_iv_{m+1}) \\
&= E(v_1) \cup \bigcup_{\mathclap{r\in R}} E(x_iv_r) \cup E(x_iv_{m+1}) = E(g).
\end{align*}
Now Lemma~\ref{lma:combrmsg2} implies item~\lref{it2_lma:combrmsg3}.

Next observe that $|R| \le k^2$ by the definitions of $R$ and $E$.
This means $x_i$ occurs at most $k^2+1$ times in $g$.
Since $x_i$ was arbitrary, we can reduce the number of occurrences of each variable in $f$ to at most $k^2+1$. Item~\lref{it1_lma:combrmsg3} is proved.
\end{proof}

\begin{lma}\label{lma:combrmsg3_np}
The \smp{} for a finite combinatorial Rees matrix semigroup is in \np{}.
\end{lma}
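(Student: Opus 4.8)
The plan is to exhibit a polynomial-time verifier: given an $\smp(S_P)$ instance $A = \{a_1,\dots,a_k\} \subseteq S_P^n$ and $b \in S_P^n$, a certificate that $b \in \subuni{A}$ is simply a word $w$ over the alphabet $\{x_1,\dots,x_k\}$ together with the claim that evaluating $w$ at $a_1,\dots,a_k$ coordinatewise gives $b$. The only thing to argue is that whenever $b \in \subuni{A}$, there is such a word $w$ of length polynomial in $k$ and $n$; verification itself is clearly in polynomial time once we have a short word, since evaluating a word of length $L$ in $S_P^n$ costs $O(Ln)$ semigroup operations.

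First I would suppose $b \in \subuni{A}$, so $b = a_{i_1} \cdots a_{i_L}$ for some word $f = x_{i_1} \cdots x_{i_L}$ over $x_1,\dots,x_k$. This word may be very long, but Lemma~\ref{lma:combrmsg3} gives a word $g$ over the same variables, with $|g| \le k(k^2+1)$, such that every combinatorial Rees matrix semigroup satisfies the identity $f \approx g$. In particular $S_P$ satisfies $f \approx g$, and since the identity $f \approx g$ holds in $S_P$ it also holds in the direct power $S_P^n$. Hence evaluating $g$ at $a_1,\dots,a_k$ in $S_P^n$ yields the same element as evaluating $f$, namely $b$. So $g$ is a certificate of length at most $k(k^2+1)$, which is polynomial in the input size (the input has size essentially $(k+1)n$, so $k \le$ input size and $k^3+k$ is polynomial in it).

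The verification algorithm then reads the certificate word $g$, checks $|g| \le k(k^2+1)$, computes the product of the corresponding tuples $a_{i}$ in $S_P^n$ in time $O(k^3 n)$, and accepts iff the result equals $b$. Soundness is immediate: if the verifier accepts some $g$, then $b$ is literally a product of elements of $A$, hence $b \in \subuni{A}$. Completeness is exactly the paragraph above. This shows $\smp(S_P) \in \np$.

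The only subtle point — and the main thing to get right rather than a genuine obstacle — is the observation that an identity $f \approx g$ valid in $S_P$ is automatically valid in $S_P^n$, so that replacing the long witnessing word by the short word $g$ from Lemma~\ref{lma:combrmsg3} does not change the value of the product on any coordinate. This is standard (direct powers satisfy the same identities as the base algebra), but it is the step that lets the bound from Lemma~\ref{lma:combrmsg3}, which is about words in free variables, transfer to the concrete tuple computation. Everything else is bookkeeping about input size and the cost of evaluating a polynomially long product.
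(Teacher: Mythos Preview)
Your proof is correct and follows essentially the same approach as the paper: invoke Lemma~\ref{lma:combrmsg3} to replace an arbitrary witnessing word by one of length at most $k(k^2+1)$, and use that short word as the \np{} certificate. You spell out more of the bookkeeping (the verifier's running time, soundness/completeness, and the passage from $S_P$ to $S_P^n$), but the core argument is the same.
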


\begin{proof}
Let $S$ be such a semigroup, and let $\{a_1,\ldots,a_k\} \subseteq S^n,\, b\in S^n$ be an instance of $\smp(S)$.
If $b \in \subuni{a_1,\ldots,a_k}$, then there is a term function $f$ such that $f(a_1,\ldots,a_k)=b$. By Lemma~\ref{lma:combrmsg3} there is a word $g$ which induces $f$ and
whose length is polynomial in $k$. Now $g$ witnesses the positive answer.
\end{proof}

For the following result note that the all-$0$ matrix has one block.

\begin{lma}\label{lma:combrmsg_ptime}
Let $S_P$ be a finite combinatorial Rees matrix semigroup such that $P\in\{0,1\}^{\Lambda \times I}$ has one block.
Then Algorithm~\ref{alg:smp_combrmsg} decides $\smp(S_P)$ in polynomial time.
\end{lma}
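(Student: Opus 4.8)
The plan is to reduce the membership test for $S_P$ to a purely combinatorial criterion that Algorithm~\ref{alg:smp_combrmsg} checks directly. Write $P(\lambda,i)=1$ iff $\lambda\in\Delta$ and $i\in J$; by Lemma~\ref{lma:combrmsg1} a product $[i_1,\lambda_1]\cdots[i_\ell,\lambda_\ell]$ in $S_P$ is nonzero exactly when $\lambda_1,\dots,\lambda_{\ell-1}\in\Delta$ and $i_2,\dots,i_\ell\in J$, and then it equals $[i_1,\lambda_\ell]$. Call a coordinate $c$ \emph{positive} if $b(c)\neq0$ and \emph{zero} otherwise. First I would dispose of one-letter words by testing whether $b\in\{a_1,\dots,a_k\}$. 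For a word $a_{j_1}\cdots a_{j_\ell}$ with $\ell\ge2$ that evaluates to $b$, the lemma forces, at every positive $c$: that $a_{j_1}(c)$ is nonzero with $I$-component that of $b(c)$ and $\Lambda$-component in $\Delta$; that $a_{j_\ell}(c)$ is nonzero with $\Lambda$-component that of $b(c)$ and $I$-component in $J$; and that every interior letter $a_{j_r}$ has $a_{j_r}(c)$ nonzero with $I$-component in $J$ and $\Lambda$-component in $\Delta$. So the algorithm loops over the $k^2$ ordered pairs $(p,q)$, verifies these conditions for $a_p$ (as first letter) and $a_q$ (as last letter) at all positive coordinates, and collects the set $M$ of generators admissible as interior letters.

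The next step is to decide, for fixed admissible $a_p,a_q$, whether the interior can be filled so that every zero coordinate collapses to $0$. By Lemma~\ref{lma:combrmsg1}, a word $a_p\,w\,a_q$ with $w$ over $M$ is $0$ at a zero coordinate $c$ iff some factor is $0$ at $c$ or some adjacent pair multiplies to $0$ at $c$. I would show that the interior can be chosen to achieve this simultaneously for all zero coordinates iff every zero coordinate $c$ satisfies one of: (i) $a_p(c)=0$ or $a_q(c)=0$; (ii) the $\Lambda$-component of $a_p(c)$ lies outside $\Delta$, or the $I$-component of $a_q(c)$ lies outside $J$; (iii) some member of $M$ is \emph{defective} at $c$, i.e.\ its value there is $0$ or has $I$-component outside $J$ or has $\Lambda$-component outside $\Delta$. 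Algorithm~\ref{alg:smp_combrmsg} accepts iff, for some $(p,q)$, the letter conditions hold and every zero coordinate is of type (i), (ii) or (iii); since this is $O(k^2)$ pairs with $O(kn)$ work each, it runs in polynomial time.

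Correctness amounts to proving the displayed equivalence both ways. If every zero coordinate is of type (i), (ii) or (iii), enumerate the zero coordinates not of type (i) or (ii) as $c_1,\dots,c_t$ — each is then of type (iii) — pick a defective $a_{r_i}\in M$ for each, and set $g:=a_p\,a_{r_1}\cdots a_{r_t}\,a_q$ (so $\ell\ge2$). At a positive coordinate every factor passes through, so $g$ evaluates to $b(c)$ by Lemma~\ref{lma:combrmsg1}; at a type-(i) or type-(ii) zero coordinate a factor or an endpoint junction of $g$ vanishes and remains vanishing after the $a_{r_i}$ are inserted; at $c_i$ the factor $a_{r_i}(c_i)$ is either $0$ or forms a vanishing junction with one of its neighbours. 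Hence $g$ witnesses $b\in\subuni{a_1,\dots,a_k}$. Conversely, given a witnessing word $g=a_{j_1}\cdots a_{j_\ell}$ with $\ell\ge2$, take $p=j_1$, $q=j_\ell$; the positive-coordinate analysis of the first paragraph yields admissibility of $a_p,a_q$ and places every interior letter into $M$, while at each zero coordinate Lemma~\ref{lma:combrmsg1} produces a vanishing adjacent pair whose position — at the $a_p$-end, at the $a_q$-end, or strictly inside $w$ — puts the coordinate into (i), (ii) or (iii).

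I expect the main obstacle to be the bookkeeping in this last step. One must handle length-$2$ words (no interior, so both endpoint junctions are in play at once), the possibility $a_p=a_q$, and especially the zero coordinates that vanish at the junction between $a_p$ and the first interior letter, or between the last interior letter and $a_q$, rather than strictly inside $w$; these are exactly the configurations in which the trichotomy (i)/(ii)/(iii) could conceivably fail to be exhaustive. Verifying that it is exhaustive relies crucially on $P$ having one block, which is what makes ``passing through a positive coordinate'' the single local condition ``$I$-component in $J$ and $\Lambda$-component in $\Delta$''; without one block these conditions would chain along $w$ and the problem is in fact \np-complete (Theorem~\ref{thm:comb_rmsg_dichotomy}).
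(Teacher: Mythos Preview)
Your argument is correct and would establish that $\smp(S_P)$ is in \ptime, but it follows a different and more explicit route than the paper's, and it does not quite prove the lemma as stated. The paper's Algorithm~\ref{alg:smp_combrmsg} does \emph{not} check your conditions (i)/(ii)/(iii): instead it computes a single element $d := \prod\{a\in A\mid a(\rg m)\subseteq J\times\Delta\}$---the product of your entire set $M$, in arbitrary order---and then simply tests whether $a_1\,d\,a_2=b$ for some $a_1,a_2\in A$. The correctness proof is correspondingly short: the forward direction is immediate, and for the backward direction one shows that if $b=g_1\cdots g_k$ with $k\ge 2$ then already $g_1\,d\,g_k=b$. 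At positive coordinates $d(i)\in J\times\Delta$ and hence passes through; at a zero coordinate one of the three cases $g_1(i)\notin I\times\Delta$, $g_k(i)\notin J\times\Lambda$, or some interior $g_j(i)\notin J\times\Delta$ must occur, and in the last case that very $g_j$ is a factor of $d$, forcing $d(i)\notin J\times\Delta$ and hence $g_1 d g_k(i)=0$. This trichotomy is exactly what you unpack into (i)/(ii)/(iii), but because the single product $d$ absorbs all defects simultaneously, no tailored interior $a_{r_1}\cdots a_{r_t}$ is needed and the junction bookkeeping you flag as the main obstacle never arises. Your approach buys a transparent, coordinate-by-coordinate picture of why membership holds; the paper's buys brevity and a cleaner algorithm. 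To literally prove the stated lemma you would still have to add the (easy) observation that, for fixed $a_p,a_q$, the condition $a_p\,d\,a_q=b$ is equivalent to your admissibility conditions on $(a_p,a_q)$ together with (i)/(ii)/(iii) at every zero coordinate.
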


\begin{algorithm}
\caption{Decides $\smp(S_P)$ in polynomial time if $P$ has one block.}
\label{alg:smp_combrmsg}
\begin{algorithmic}[1]
\Input 
\begin{minipage}[t]{\textwidth-\widthof{\textbf{Input: }}}
$A \subseteq {S_P}^n,\, b \in {S_P}^n$,  \\
$m\in\{0,\ldots,n\}$ such that $b(i)\ne 0$ iff $i\in\rg{m}$, \\
$J\subseteq I$, $\Delta \subseteq \Lambda$ such that $P(\lambda,i)=1$ iff $(\lambda,i) \in \Delta \times J$ for $i\in I$, $\lambda \in \Lambda$.
\end{minipage}
\Output{\true{} if $b \in \subuni{ A }$, \false{} otherwise. }
	\If{$b\in A$}
		\State\Return\true{}
		\label{ret0_alg:smp_combrmsg}
	\EndIf
	\State $d := \prod \{ a\in A \mid a(\rg{m})\subseteq J\times\Delta \}$\quad{}(some order)
	\label{set_d_alg:smp_combrmsg}
	\State\Return $\exists a_1,a_2 \in A \colon a_1da_2 = b$
	\label{ret1_alg:smp_combrmsg}
\end{algorithmic}
\end{algorithm}

\begin{proof}
Fix an input $A \subseteq {S_P}^n,\, b \in {S_P}^n$.
We may assume that there is an $m \in \{0,\ldots,n\}$ such that
\begin{align*}\begin{aligned}
b(i) &\neq 0 && \text{for } i \in \rg{m}, \\
b(i) &= 0 && \text{for } i \in \{m+1,\ldots,n\}.
\end{aligned}\end{align*}

\emph{Correctness of Algorithm~\ref{alg:smp_combrmsg}.} 
If Algorithm~\ref{alg:smp_combrmsg} returns \true{}, then clearly $b\in\subuni{A}$.
Conversely assume $b\in\subuni{A}$. We show that \true{} is returned.
Let $g_1,\ldots,g_k \in A$ such that $b=g_1 \cdots g_k$.
If $k=1$ then \true{} is returned in line~\ref{ret0_alg:smp_combrmsg}.
Assume $k \ge 2$.
We have
\begin{gather}\label{eq1_alg:smp_combrmsg}
\begin{split}
g_1(i) \in I \times \Delta, \quad g_k(i) \in J \times \Lambda, &\quad\text{and} \\
g_2(i),\ldots,g_{k-1}(i) \in J \times \Delta &\quad\text{for all } i\in\rg{m};
\end{split}
\end{gather}
otherwise we obtain the contradiction $g_1\cdots g_k(i)=0$ for some $i\in\rg{m}$.
Let $d$ have a value assigned by line~\ref{set_d_alg:smp_combrmsg}.
We claim that
\begin{equation}\label{eq1.5_alg:smp_combrmsg}
g_1 d g_k = b.
\end{equation}
For $i\in\rg{m}$ we have $d(i)\in J\times\Delta$.
The multiplication rule and \eqref{eq1_alg:smp_combrmsg} imply
\[ b(i) = g_1 \cdots g_k(i) = g_1dg_k(i). \]
Now let $i \in \{m+1,\ldots,n\}$.
Since $b(i)=0$, there are three cases:
$g_1(i) \notin I \times \Delta$, $g_k(i) \notin J \times \Lambda$, or $g_j(i) \notin J \times \Delta$ for some $j\in \{2,\ldots,k-1\}$.
In the first two cases $g_1dg_k(i)=0=b(i)$ holds.
In the third case $a:=g_j$ occurs as factor in line~\ref{set_d_alg:smp_combrmsg}.
Thus $d(i)\notin J\times\Delta$, and hence $g_1dg_k(i)=0$. This proves \eqref{eq1.5_alg:smp_combrmsg}. 
So the algorithm returns \true{} in line~\ref{ret1_alg:smp_combrmsg}.

\emph{Complexity of Algorithm~\ref{alg:smp_combrmsg}.} The product in line~\ref{set_d_alg:smp_combrmsg} can be computed in $\mathcal{O}(|A|n)$ time.
Checking the condition in line \ref{ret1_alg:smp_combrmsg} requires $\mathcal{O}(|A|^2n)$ time.
Altogether Algorithm~\ref{alg:smp_combrmsg} runs in $\mathcal{O}(|A|^2n)$ time.
\end{proof}

Now we prove Theorem~\ref{thm:comb_rmsg_dichotomy} and Corollary~\ref{clr:comb_reesmatrix_a2_b2}.

\begin{proof}[Proof of Theorem~\ref{thm:comb_rmsg_dichotomy}]
Assume $P\in\{0,1\}^{\Lambda \times I}$.
If $P$ has one block, then $\smp(S_P)$ is in \ptime{} by Lemma~\ref{lma:combrmsg_ptime}.
Assume $P$ does not have one block.
Then there are $i,j \in I$ and $\lambda,\mu \in \Lambda$ such that 
\[ P(\lambda,i) = P(\mu,j) = 1 \quad\text{and}\quad P(\mu,i)=0. \]
Let $r:=[i,\lambda]$, $s:=[i,\mu]$, and $t:=[j,\mu]$.
Then $rs=st=s$, and $s$ does not generate a group.
By Theorem~\ref{thm:nphard_condition} $\smp(S_P)$ is \np-hard.
\np-easiness follows from Lemma~\ref{lma:combrmsg3_np}.
\end{proof}

\begin{proof}[Proof of Corollary~\ref{clr:comb_reesmatrix_a2_b2}]
The result is immediate from Theorem~\ref{thm:comb_rmsg_dichotomy}.
\end{proof}

Next we restate the Rees Theorem (see \cite[Theorem 3.2.3]{Howie1995}) for the case of finite combinatorial $0$-simple semigroups:
\begin{thm}[Rees Theorem]\label{thm:rees_comb0simple}
Let $P$ be a finite $0$-$1$ matrix such that each row and each column has at least one 1.
Then $S_P$ is a finite combinatorial $0$-simple semigroup.

Conversely, every finite combinatorial\/ $0$-simple semigroup is isomorphic to one constructed in this way.
\end{thm}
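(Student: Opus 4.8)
The plan is to reduce both implications to the classical Rees Theorem \cite[Theorem 3.2.3]{Howie1995} by identifying $S_P$ with the Rees matrix semigroup $\rmsgz{\{1\}}{I}{\Lambda}{P}$ built over the trivial group. I would begin by unwinding the definitions: regarding $P \in \{0,1\}^{\Lambda\times I}$ as a matrix over $\{1\}^0 = \{1,0\}$, the Rees product $[i,\lambda]\cdot[j,\mu] = [i,\,p_{\lambda j}\mu]$ collapses to $[i,\mu]$ when $p_{\lambda j}=1$ and to $0$ when $p_{\lambda j}=0$, which is verbatim the multiplication placed on $S_P = (I\times\Lambda)\cup\{0\}$ in the introduction. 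Thus $S_P$ and $\rmsgz{\{1\}}{I}{\Lambda}{P}$ coincide up to renaming, and the hypothesis that every row and every column of $P$ contains a $1$ is exactly the regularity condition that \cite{Howie1995} imposes on $P$.

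For the first assertion I would then simply invoke the Rees Theorem: a regular matrix $P$ over $\{1\}^0$ makes $\rmsgz{\{1\}}{I}{\Lambda}{P}=S_P$ a $0$-simple semigroup; it is finite because $I$ and $\Lambda$ are; and it is combinatorial because in a completely $0$-simple semigroup $\rmsgz{G}{I}{\Lambda}{P}$ every maximal subgroup is isomorphic to $G$, which here is trivial, so every subgroup of $S_P$ has a single element.

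For the converse, let $S$ be a finite combinatorial $0$-simple semigroup. The first step is to observe that a finite $0$-simple semigroup is automatically completely $0$-simple, since finiteness forces the existence of $0$-minimal left and right ideals; hence the Rees Theorem applies and yields $S\cong\rmsgz{G}{I}{\Lambda}{Q}$ for some group $G$, nonempty sets $I,\Lambda$, and a regular $\Lambda\times I$ matrix $Q$ over $G^0$, with $G,I,\Lambda$ all finite because $S$ is. Next, since the maximal subgroups of $S$ are isomorphic to $G$ and $S$ is combinatorial, $G$ must be trivial. Finally, replacing each nonzero entry of $Q$ (each of which equals the identity of $G$) by $1$ and each zero entry by $0$ produces a matrix $P\in\{0,1\}^{\Lambda\times I}$ with $\rmsgz{G}{I}{\Lambda}{Q}\cong S_P$, and regularity of $Q$ translates precisely into each row and each column of $P$ containing a $1$. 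Hence $S\cong S_P$ with $P$ of the stated form.

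I do not expect a genuinely hard step here: the statement is a specialization of a known structure theorem, so the substance is the notational matching between $S_P$ and $\rmsgz{G}{I}{\Lambda}{P}$ together with the observation that being combinatorial is equivalent to triviality of the structure group $G$. The one point that should be quoted rather than re-derived is that a finite $0$-simple semigroup is completely $0$-simple, since that is exactly what licenses the use of the Rees Theorem.
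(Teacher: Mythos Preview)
Your proposal is correct and is precisely the intended approach: the paper's own proof is simply ``See \cite{Howie1995}'', i.e.\ a direct appeal to the classical Rees Theorem, which is exactly what you unpack. Your additional remarks that finite $0$-simple implies completely $0$-simple and that combinatoriality forces the structure group $G$ to be trivial are the only steps needed to specialize the general theorem to this setting, so there is nothing to add.
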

\begin{proof} See \cite{Howie1995}. \end{proof}

\begin{lma}\label{lma:0simple_oneblock}
Let $S_P$ be a finite combinatorial\/ $0$-simple semigroup. 
Then the matrix $P$ has one block if and only if $S_P$ has no zero divisors, i.e.\ 
for $s,t\in S_P$, $st=0$ implies that $s=0$ or $t=0$.
\end{lma}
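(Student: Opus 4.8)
The plan is to prove the biconditional in Lemma~\ref{lma:0simple_oneblock} directly from the multiplication rule on $S_P$, using the $0$-simplicity hypothesis (equivalently, by Theorem~\ref{thm:rees_comb0simple}, that every row and every column of $P$ contains at least one $1$).

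For the $(\Rightarrow)$ direction, suppose $P$ has one block, witnessed by $J\subseteq I$ and $\Delta\subseteq\Lambda$, so $P(\lambda,i)=1$ iff $(\lambda,i)\in\Delta\times J$. First I would observe that $0$-simplicity forces $J=I$ and $\Delta=\Lambda$: since every row $\lambda$ has a $1$, every $\lambda$ lies in $\Delta$, and since every column $i$ has a $1$, every $i$ lies in $J$. Hence $P$ is the all-$1$ matrix, and then $[i,\lambda]\cdot[j,\mu]=[i,\mu]\neq 0$ for all $i,j\in I$, $\lambda,\mu\in\Lambda$; so a product of two nonzero elements is never $0$, i.e.\ $S_P$ has no zero divisors.

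For the $(\Leftarrow)$ direction I would argue contrapositively: assume $P$ does not have one block and produce zero divisors. As in the proof of Theorem~\ref{thm:comb_rmsg_dichotomy}, the failure of the one-block condition (together with the fact that every row and column has a $1$, so $P$ is not all-$0$) yields indices $i,j\in I$ and $\lambda,\mu\in\Lambda$ with $P(\lambda,i)=P(\mu,j)=1$ and $P(\mu,i)=0$. Then $s:=[i,\mu]$ and $r:=[i,\lambda]$ are nonzero, but $s\cdot r=[i,\mu]\cdot[i,\lambda]$ has first factor's second coordinate $\mu$ and second factor's first coordinate $i$, and $P(\mu,i)=0$, so $s\cdot r=0$. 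Thus $S_P$ has a zero divisor, completing the contrapositive.

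I do not anticipate a serious obstacle here; the only point requiring a little care is making sure the combinatorial extraction of the indices $i,j,\lambda,\mu$ in the $(\Leftarrow)$ direction is correct, namely that "not one block" genuinely provides a row $\mu$ and column $i$ with a $0$ in position $(\mu,i)$ while $\mu$ has a $1$ somewhere (at column $j$) and $i$ has a $1$ somewhere (at row $\lambda$) — but this is exactly the index choice already made in the proof of Theorem~\ref{thm:comb_rmsg_dichotomy}, so I would simply reuse it. The $0$-simplicity hypothesis is essential only in the $(\Rightarrow)$ direction, to upgrade "one block" to "all-$1$ matrix"; without it, a one-block matrix with a zero row or column would have $s\cdot t=0$ for suitable nonzero $s,t$.
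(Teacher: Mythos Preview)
Your proof is correct and follows essentially the same approach as the paper. The only difference is that your $(\Leftarrow)$ direction is slightly more elaborate than needed: the paper simply observes that if $P$ does not have one block then some entry $P(\lambda,i)=0$, and then $[i,\lambda]^2=0$ already exhibits a zero divisor, so the extra indices $j,\mu$ from the proof of Theorem~\ref{thm:comb_rmsg_dichotomy} are unnecessary here.
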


\begin{proof}
Assume $P\in\{0,1\}^{\Lambda \times I}$.
If $P$ has one block, then all entries of $P$ are $1$.
Thus $S_P$ has no zero divisors.
If $P$ does not have one block, then $P(\lambda,i)=0$ for some $\lambda \in \Lambda$, $i \in I$.
Now $[i,\lambda]$ is a zero divisor since $[i,\lambda]^2=0$.
\end{proof}

\begin{clr}\label{thm:comb_0simple_dichotomy}
If a finite combinatorial\/ $0$-simple semigroup $S$ has no zero divisors, then $\smp(S)$ is in \ptime.
Otherwise $\smp(S)$ is \np-complete.
\end{clr}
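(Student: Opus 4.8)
The plan is to deduce this corollary directly from the dichotomy already established for combinatorial Rees matrix semigroups. First I would observe that the subpower membership problem depends only on the isomorphism type of the semigroup: a bijection between the underlying sets transforms instances of $\smp$ for one semigroup into instances of $\smp$ for an isomorphic one in linear time, and conversely. Hence it suffices to prove the statement for semigroups of the form $S_P$. By the Rees Theorem (Theorem~\ref{thm:rees_comb0simple}), every finite combinatorial $0$-simple semigroup $S$ is isomorphic to $S_P$ for some finite $0$-$1$ matrix $P$ in which every row and every column contains at least one $1$; I would fix such a $P$.

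Next I would invoke Lemma~\ref{lma:0simple_oneblock}, which applies since $S_P$ is a finite combinatorial $0$-simple semigroup: the matrix $P$ has one block if and only if $S_P$ (equivalently $S$) has no zero divisors. Finally I would apply Theorem~\ref{thm:comb_rmsg_dichotomy} to $S_P$: if $P$ has one block then $\smp(S_P)$ is in \ptime, and otherwise $\smp(S_P)$ is \np-complete. Chaining these two equivalences gives the claim. If $S$ has no zero divisors, then $P$ has one block, so $\smp(S)$ is in \ptime. If $S$ has a zero divisor, then $P$ does not have one block, so $\smp(S)$ is \np-complete.

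There is no real obstacle here; the corollary is an immediate combination of Theorem~\ref{thm:comb_rmsg_dichotomy} and Lemma~\ref{lma:0simple_oneblock}. The only points worth making explicit are the passage through the Rees Theorem, which guarantees that an abstract finite combinatorial $0$-simple semigroup is of the form $S_P$, and the (routine) remark that the complexity of the subpower membership problem is preserved under isomorphism, so that the representation-dependent results of this section transfer to the abstract setting.
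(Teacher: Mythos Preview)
Your proposal is correct and follows essentially the same approach as the paper, which states that the result is immediate from Theorem~\ref{thm:comb_rmsg_dichotomy} and Lemma~\ref{lma:0simple_oneblock}. Your only additions are the explicit invocation of the Rees Theorem and the remark on isomorphism invariance of $\smp$, both of which are routine and implicit in the paper's one-line proof.
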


\begin{proof}
The result is immediate from Theorem~\ref{thm:comb_rmsg_dichotomy} and Lemma~\ref{lma:0simple_oneblock}.
\end{proof}

\section{Semigroups with \pspace-complete \smp}
\label{sec:pspace}

In \cite{BMS2015} an upper bound on the complexity of the \smp{} for semigroups was established:
\begin{thm}[{\cite[Theorem 2.1]{BMS2015}}]\label{thm:pspace}
The \smp{} for a finite semigroup is in \pspace.
\end{thm}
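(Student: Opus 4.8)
The plan is to avoid the naive closure algorithm --- which would store a subsemigroup of $S^n$ of size up to $|S|^n$ and hence use exponential space --- and instead rephrase membership in $\subuni A$ as a reachability question in an exponentially large but only implicitly described digraph, which can then be settled in polylogarithmic space in the number of vertices by Savitch's theorem.

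Concretely, given an instance $A=\{a_1,\ldots,a_k\}\subseteq S^n$ and $b\in S^n$, let $G$ be the digraph with vertex set $S^n$ having an edge from $u$ to $v$ whenever $v=u\cdot a_j$ for some $j\in\rg k$. Since $\subuni A$ is exactly the set of tuples of the form $a_{j_1}\cdots a_{j_m}$ with $m\ge 1$ and $j_1,\ldots,j_m\in\rg k$, one checks directly that $b\in\subuni A$ if and only if in $G$ there is a directed path of length $\ge 0$ from $a_j$ to $b$ for some $j\in\rg k$ (a length-$0$ path corresponding to the one-letter word $a_j$). So it suffices to decide this reachability problem in polynomial space.

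Now $G$ has $N:=|S|^n$ vertices, i.e.\ $2^{\Theta(n)}$ of them because $S$ is a fixed finite semigroup; a vertex, being a tuple in $S^n$, is stored in $\mathcal O(n)$ space; and given vertices $u,v$ one can test in space $\mathcal O(n)$ whether $u=v$ or whether $u\to v$ is an edge, by running through the generators $a_j$ and comparing $u\cdot a_j$ with $v$ coordinatewise. One then runs the usual recursive midpoint procedure: $\mathrm{Path}(u,v,\ell)$ returns \true{} iff there is a path from $u$ to $v$ of length at most $2^{\ell}$, the base case $\ell=0$ being the edge/equality test just described, and the recursive case looping over all candidate midpoints $w\in S^n$ and returning \true{} iff $\mathrm{Path}(u,w,\ell-1)$ and $\mathrm{Path}(w,v,\ell-1)$ both succeed. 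Membership of $b$ in $\subuni A$ is decided by calling $\mathrm{Path}(a_j,b,\lceil\log_2 N\rceil)$ for each $j\in\rg k$. The recursion depth is $\lceil\log_2 N\rceil=\mathcal O(n)$ and each frame holds a constant number of vertices together with the counter $\ell$, i.e.\ $\mathcal O(n)$ bits, so the whole computation uses $\mathcal O(n^2)$ space --- polynomial in the input length, which is essentially $(k+1)n$. This places $\smp(S)$ in $\pspace$.

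The crux here is conceptual rather than technical: one must recognize that $G$ and the generated subsemigroup are never to be materialized, only edges of $G$ queried on demand, and that the exponential vertex count of $G$ is harmless because reachability costs only $\mathcal O(\log^2 N)$ space (and an edge query costs $\mathcal O(\log N)$ space, since $\log N=\Theta(n)$). The remaining ingredients --- faithfulness of the reachability reformulation, including the degenerate one-letter case, and correctness of $\mathrm{Path}$ --- are routine inductions on word length and on $\ell$ respectively, and the accounting above confirms that the space bound is genuinely polynomial in the size of the $\smp$ instance.
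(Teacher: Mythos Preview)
Your proof is correct and follows essentially the same approach as the paper: both recognize that one should never materialize $\subuni A$, keep only a single current tuple in $S^n$, and invoke Savitch's theorem on the resulting reachability problem. The only cosmetic difference is that the paper first observes the problem is in nondeterministic linear space (guess the next generator, multiply, repeat) and then cites Savitch, whereas you unroll Savitch's midpoint recursion explicitly.
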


\begin{proof}
Let $S$ be a finite semigroup. We show that 
\begin{equation} \label{eq:NPS}
\smp(S) \text{ is in nondeterministic linear space.}
\end{equation}
To this end, let $A\subseteq S^n,\,b\in S^n$ be an instance of $\smp(S)$.
If $b\in\subuni{A}$, then there exist $a_1,\dots,a_m\in A$ such that $b = a_1\cdots a_m$.

Now we pick the first generator $a_1 \in A$ nondeterministically and start with $c := a_1$.
Pick the next generator $a \in A$ nondeterministically, compute $c := c\cdot a$, and repeat until 
we obtain $c = b$. Clearly all computations can be done in space linear in $|A|n$.
This proves~\eqref{eq:NPS}. By a result of Savitch~\cite{Savitch1970} this implies that $\smp(S)$ is in deterministic quadratic space.
\end{proof}

In \cite{BMS2015} it was shown that the \smp{} for the full transformation semigroup on $5$ letters is \pspace-complete by reducing \dprobfont{Q3SAT} to $\smp(\Tsg_5)$.
We adapt the proof of this result and show that under the following conditions 
the \smp{} for a semigroup is \pspace-complete.

\begin{lma}\label{lma:mthm_pspace}
Let $S$ be a finite semigroup and $s,t,\n \in S$ such that
\begin{enumerate}
\item\label{it1_lma:mthm_pspace}
$sts = s$, $tst = t$,
\item $s^2,t^2 \gjless s$,
\item $s\n = s$ and $t\n = t$. 
\end{enumerate}
Then $\smp(S)$ is \pspace-complete.
\end{lma}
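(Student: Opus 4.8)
The plan is to reduce the PSPACE-complete problem \dprobfont{Q3SAT} (quantified satisfiability of a 3-CNF formula with a fully quantified prefix) to $\smp(S)$, adapting the construction used for $\Tsg_5$ in \cite{BMS2015}. Given a quantified formula $\Psi = Q_1 x_1 \cdots Q_k x_k\, \Phi$ with $\Phi$ a conjunction of clauses $C_1,\dots,C_m$, the plan is to build an instance $A\subseteq S^n$, $b\in S^n$ whose coordinates are grouped into blocks that simulate (i) a binary counter running through all $2^{k}$ truth assignments to the existentially controlled variables while the universal variables are forced to take both values, and (ii) clause-checking coordinates that record, for the current partial assignment, whether each clause is already satisfied. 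The pair $s,t$ with $sts=s$, $tst=t$ plays the role of the two non-identity "switch" states of a two-state reset-like behaviour (just as in the $\Tsg_5$ proof), the elements $s^2,t^2$ lying strictly below $s$ in the $\gj$-order act as "failure" absorbers that kill any product encoding an inconsistent computation, and $\n$ is the local identity on $s$ and $t$ used to pad coordinates that should be left untouched at a given step. The generator tuples correspond to the elementary moves of the alternating/branching computation, and $b$ is the accepting target; $b\in\subuni A$ will hold precisely when the quantified formula is true.

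The steps I would carry out, in order: first, fix the precise encoding — decide how many coordinate blocks are used (one block of size $O(k)$ for the assignment counter, blocks indexed by clauses, and possibly auxiliary "phase" coordinates), and specify for each generator of $A$ what it does on each block using only $s,t,\n$ and the idempotent/failure elements derived from them (note that from $sts=s$, $tst=t$ one gets idempotents $e:=st$, $f:=ts$ with $es=s$, $sf=s$, etc., and from $s^2\gjless s$ that any product in which the "switch" coordinate is pushed the wrong way collapses below $s$ and can never return to the target value $s$ — this is the analogue of Lemma~\ref{lma:hardness_jclass_argument} and is exactly what enforces correctness of the simulation). Second, prove the forward direction: if $\Psi$ is true, then the winning strategy for the $\exists$-player yields an explicit word in the generators evaluating to $b$; this is a direct, if tedious, verification coordinate by coordinate. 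Third, prove the backward direction: if $b = a_{i_1}\cdots a_{i_\ell}$ for some generators, then by tracking the switch coordinates and using $s^2,t^2\gjless s$ one shows the sequence of moves must be a legal run of the alternating computation that in fact certifies $\Psi$ true — here one argues, as in the NP-hardness proof, that certain generators can be used at most once or in a constrained order because a repeated/out-of-order use forces a factor like $s^2$ or $t^2$ and hence a value strictly $\gj$-below $s$, which cannot be undone. Fourth, observe the reduction is polynomial-time computable ($n$ and $|A|$ are polynomial in $k+m$) and PSPACE-membership is already given by Theorem~\ref{thm:pspace}, so $\smp(S)$ is PSPACE-complete.

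The main obstacle I expect is engineering the coordinate gadget so that the multiplicative semantics of $S$ — which only gives us the weak "inverse pair" relations $sts=s$, $tst=t$, a left identity $\n$, and downward-collapsing squares, rather than honest function composition as for $\Tsg_5$ — is still rich enough to (a) force the universal quantifiers (i.e. make the target $b$ unreachable unless \emph{both} settings of each $\forall$-variable extend to a satisfying completion) and (b) prevent "cheating" products that mix contributions from incompatible branches of the quantifier tree. Handling (a) is the delicate point: in the $\Tsg_5$ reduction one exploits that a transformation can copy one coordinate's value into another; with only $s,t,\n$ one must instead encode the universal branching by duplicating the relevant coordinate blocks and arranging that the single product $b$ simultaneously witnesses both copies, so that a generator word succeeding on one copy but not the other produces an $s^2$-type collapse in the failing copy. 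Getting the bookkeeping of these duplicated blocks and the ordering constraints right — and then proving the backward direction rigorously via $\gj$-order arguments exactly as in Lemma~\ref{lma:hardness_jclass_argument} — is where essentially all the work lies; the complexity and PSPACE-membership parts are immediate.
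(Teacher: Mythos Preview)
Your broad outline---reduce \dprobfont{Q3SAT} to $\smp(S)$, encode states in $\{s,st\}$, act by right multiplication with $\n,s,t,st,ts$, and use $s^2,t^2\gjless s$ as an irreversible collapse---matches the paper exactly. The gap is in your treatment of the universal quantifiers. You propose to force each $\forall$-variable to take both values by \emph{duplicating} coordinate blocks so that the target $b$ must simultaneously witness both branches. This cannot yield a polynomial reduction: the quantifier prefix is nested, so $n$ universal variables would require $2^n$ parallel copies. (Also, two small slips: $\n$ is a \emph{right} identity in the hypotheses, not a left one; and the counter should run through the \emph{universal} assignments, not the existential ones.)

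The paper handles universals \emph{sequentially}, not in parallel---and this is already what the $\Tsg_5$ proof in \cite{BMS2015} does, so no new idea is needed here. The product witnessing $b$ is exponentially long: it steps in lexicographic order through all $2^n$ assignments to $x_1,\dots,x_n$, and after each increment the existential variables are (re)chosen and every clause is rechecked. The instance stays polynomial because only the generator tuples for ``increment bit $j$'' ($b_j$), ``set $y_j$'' ($c_j^{+},c_j^{-},c_j^{0}$), and ``check literal $k$ of clause $j$'' ($d_{jk}$) are needed. What enforces that \emph{every} universal assignment is visited is a dedicated block of $n$ control coordinates (plus $m$ clause-status coordinates) whose state in $\{s,st\}$ can only be toggled by a rigid sequence of generators; the backward analysis then shows any product equal to $b$ must have exactly the counter-increment structure. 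So the counter you mention in your first paragraph \emph{is} the mechanism for universals; the duplication idea is neither needed nor workable.
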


\begin{proof}
From item~\lref{it1_lma:mthm_pspace} we know that $s,t,st,ts$ are in the same $\gj$-class.
Observe that $s\ne st$; otherwise $s^2=sts=s$, which is impossible.
We consider $s$ and $st$ as states and let $\n,s,t,st,ts$ act on these states by multiplication on the right.
This yields the partial multiplication table
\begin{equation}\label{eq05_mthm}
\begin{array}{c|ccccc}
S  & \n   & s    & t    & st   & ts    \\ \hline
s  & s    &\infty& st   &\infty& s     \\
st & st   & s    &\infty& st   &\infty \\
\end{array}
\end{equation}
where $\infty$ means that this entry is $\gjless s$.

$\smp(S)$ is in \pspace{} by Theorem~\ref{thm:pspace}.
For \pspace-hardness we reduce \dprobfont{Q3SAT} to $\smp(S)$.
\dprobfont{Q3SAT} is \pspace-complete~\cite{Pa:CC} and can be defined as follows.
\dproblem{Q3SAT}{
triples $C_1,\ldots,C_m$ over\\
$\{x_1,\ldots,x_n,\neg x_1,\ldots, \neg x_n,y_1,\ldots,y_n,\neg y_1,\ldots,\neg y_n\}$
}{%
Is the Boolean formula \\
$\Phi := \forall x_1 \exists y_1 \ldots \forall x_n \exists y_n \, (\bigvee C_1) \wedge \ldots \wedge (\bigvee C_m)$ true?}
Let $C_1,\ldots,C_m$ be a \dprobfont{Q3SAT} instance and $\Phi$ be the corresponding Boolean formula.
We refer to $x_1,\ldots,x_n$ as \emph{universal variables}, to $y_1,\ldots,y_n$ as \emph{existential variables}, and to $C_1,\ldots,C_m$ as \emph{clauses}.

We define the corresponding $\smp(S)$ instance
\begin{align*} 
G \subseteq S^{3n+m+1},\ f \in S^{3n+m+1}
\end{align*}
where
\begin{align*}
G &:= \{a\} \cup B\cup C\cup D\cup E, \\
B &:= \{b_1,\ldots,b_n\}, \\
C &:= \{ c_{j}^k \sst j \in \rg{m},\ k \in \{+,-,0\} \}, \\
D &:= \{ d_{jk} \sst j \in \rg{m},\ k \in \rg{3} \}, \\ 
E &:= \{e_1,\ldots,e_n\}.
\end{align*}
The coordinates of these tuples will have the following meaning. The first $n$ positions encode truth values assigned to $x_1,\ldots,x_n$, and the second $n$ positions truth values assigned to $y_1,\ldots,y_n$. 
Truth values are represented by
\begin{align*} \0 := s \quad\text{and}\quad \1 &:= st. \end{align*}

The positions $2n+1$ to $3n$ control the order in which the given tuples are multiplied.
The $m$ positions after that indicate the status of the clauses $C_1,\ldots,C_m$. In particular, for $i\in\rg{m}$ position $3n+i$ encodes whether $\bigvee C_i$ is satisfied by the assignment given by the first $2n$ positions. We encode 
\[ \text{``unsatisfied'' by } s \text{, and ``satisfied'' by } st.\]
The last position ensures that the first generator of the target tuple $f$ is the tuple $a$.
The generators are explicitly given as follows. 
For an overview see Figure~\ref{fig:gens}.

\begin{itemize}
\item
We define our target tuple $f$ by
\begin{align*}\begin{split}
f(i) &:= st \quad\text{for } i\in\rg{3n+m}, \\
f(3n+m+1) &:= s.
\end{split}\end{align*}

\item The tuple $a$ will be the first generator of $f$. It encodes the all-zero assignment for the variables $x_1,\ldots,x_n,y_1,\ldots,y_n$. Let
\begin{align*} 
a(i) := s \quad\text{for } i \in \rg{3n+m+1}.
\end{align*}
The idea is that $a$ is multiplied only on the right by elements of $G$.
The components of this product are the states $s$ and $st$. 
Each multiplication on the right modifies the states according to the multiplication table \eqref{eq05_mthm}.

\item For $j \in \rg{n}$ let $b_j$ change the assignment for the universal variables 
\[ \text{from $(x_1,\ldots,x_{j-1},\0,\1,\ldots,\1)$ to $(x_1,\ldots,x_{j-1},\1,\0,\ldots,\0)$.}\]

For $j\in\rg{n}$ define
\begin{flalign*}\begin{split}
b_j(i) &:= \begin{cases} 
\n &\text{if } i \in \rg{j-1} \text{ or } n+1 \le i \le 2n, \\
t &\text{if } i=j, \\ 
s & \text{if } j < i \le n, \\ 
 \end{cases} \\
b_j(2n+i) &:= \begin{cases} 
st &\text{if } i\in\rg{j-1}, \\ 
s &\text{if } j\le i \le m+n, \end{cases} \\
b_j(3n+m+1) &:= ts. \\
\end{split}\end{flalign*}

\item For $j\in\rg n$ let $c_j^+$ and $c_j^-$ change the assignment for the existential variable $y_j$ from $\0$ to $\1$ and from $\1$ to $\0$, respectively. Let $c_j^0$ leave the variables unchanged.
For $j \in \rg{n}$ and $i \in \rg{3n+m+1}$ let
\begin{align*}
\hphantom{c_j^-(n+j)}
\llap{$c_j^0(i)$} &:= \begin{cases} 
t &\text{if $i=2n+j$,} \\ 
st &\text{if $i=3n+m+1$,} \\ 
\n &\text{otherwise.} \end{cases}
\end{align*}
The tuples $c_j^+$ and $c_j^-$ differ from $c_j^0$ only in the following positions:
\begin{align*}
c_j^+(n+j) &:= t, \quad
c_j^-(n+j) := s.
\end{align*}

\item
For $j \in \rg{m}$ and $k \in \rg{3}$ the tuple $d_{jk}$ evaluates the $k$th literal $C_{jk}$ of the $j$th clause. If this literal is satisfied by the assignment encoded in the first $2n$ components, then multiplying by $d_{jk}$ changes the status of the clause $C_j$  to ``satisfied''. This will be more formally stated in Claim~\ref{cl1_mthm}.
For $i \in \rg n$ define
\begin{align*}\begin{split}
\hphantom{d_{jk}(3n+m+1)}
\llap{$d_{jk}(i)$} &:= 
\begin{cases} 
st & \text{if } C_{jk} = x_i, \\ 
ts & \text{if } C_{jk} = \neg x_i, \\ 
\n  &\text{otherwise,}
\end{cases} \\
d_{jk}(n+i) &:= \begin{cases} 
st & \text{if } C_{jk} = y_i, \\ 
ts & \text{if } C_{jk} = \neg y_i, \\ 
\n  &\text{otherwise,}
\end{cases} \\
d_{jk}(2n+i) &:= st. \\
\end{split}\end{align*}
For $i\in\rg m$ let
\begin{align*}\begin{split}
d_{jk}(3n+i) &:= \begin{cases} 
t &\text{if } i=j, \\
\n  &\text{otherwise,}
\end{cases} \\
d_{jk}(3n+m+1) &:= 
\rlap{$ts$.}
\hphantom{
\begin{cases} 
st & \text{if } C_{jk} = x_i, \\ 
ts & \text{if } C_{jk} = \neg x_i, \\ 
\n  &\text{otherwise,}
\end{cases}} \\
\end{split}\end{align*}

\item
Let $j\in\rg{n}$. After each assignment was succesfully evaluated, 
the tuple $e_j$ sets position $n+j$ to $st$ in order to match the target tuple if necessary.
We define
\begin{align*}\begin{aligned}
e_j(i) &:= st, &&\text{for $i\in\rg{n}$,}\\
e_j(n+i) &:= \rlap{$\begin{cases}
t  & \text{if } i=j, \\
\n & \text{if } i\in \rg{2n}\setminus \{j\}, \\
\end{cases}$} &&\\
e_j(3n+i) &:= st &&\text{for $i\in\rg{m}$,} \\
e_j(3n+m+1) &:= ts. && \\
\end{aligned}\end{align*}
The tuples $e_1,\ldots,e_n$ will only occur as final generators of $f$.
\end{itemize}

\newcommand{\vdotp}{\raisebox{-2pt}{\text{\smash{$\vdots$}}}}
\newcommand{\ddotp}{\raisebox{-2pt}{\text{\smash{$\ddots$}}}}
\newcommand{\st}{st}
\begin{figure}
\begin{gather*}
\arraycolsep=4pt
\def\arraystretch{1.15}
\begin{array}[c]{r@{}rcccccccccccccl}
a       &{} = (\hspace{-\arraycolsep}&  s   &\cdots&  s   &  s   &\cdots&  s   &  s   &\cdots&  s   &  s   &\cdots&  s   &  s   &\hspace{-\arraycolsep}) \vspace{6pt}\\ 
b_1     &{} = (\hspace{-\arraycolsep}&  t   &  s   &  s   &      &      &      &  s   &\cdots&  s   &  s   &\cdots&  s   &  ts  &\hspace{-\arraycolsep}) \\
\vdotp\ &{}   (\hspace{-\arraycolsep}&      &\ddotp&  s   &      &      &      & \st  &\ddotp&\vdotp&\vdotp&\ddotp&\vdotp&\vdotp&\hspace{-\arraycolsep}) \\
b_n     &{} = (\hspace{-\arraycolsep}&      &      &  t   &      &      &      & \st  & \st  &  s   &  s   &\cdots&  s   &  ts  &\hspace{-\arraycolsep}) \vspace{6pt}\\
c_1^+   &{} = (\hspace{-\arraycolsep}&      &      &      &  t   &      &      &  t   &      &      &      &      &      &  ts  &\hspace{-\arraycolsep}) \\
\vdotp\ &{}   (\hspace{-\arraycolsep}&      &      &      &      &\ddotp&      &      &\ddotp&      &      &      &      &\vdotp&\hspace{-\arraycolsep}) \\
c_n^+   &{} = (\hspace{-\arraycolsep}&      &      &      &      &      &  t   &      &      &  t   &      &      &      &  ts  &\hspace{-\arraycolsep}) \vspace{6pt}\\
c_1^-   &{} = (\hspace{-\arraycolsep}&      &      &      &  s   &      &      &  t   &      &      &      &      &      &  ts  &\hspace{-\arraycolsep}) \\
\vdotp\ &{}   (\hspace{-\arraycolsep}&      &      &      &      &\ddotp&      &      &\ddotp&      &      &      &      &\vdotp&\hspace{-\arraycolsep}) \\
c_n^-   &{} = (\hspace{-\arraycolsep}&      &      &      &      &      &  s   &      &      &  t   &      &      &      &  ts  &\hspace{-\arraycolsep}) \vspace{6pt}\\
c_1^0   &{} = (\hspace{-\arraycolsep}&      &      &      &      &      &      &  t   &      &      &      &      &      &  ts  &\hspace{-\arraycolsep}) \\
\vdotp\ &{}   (\hspace{-\arraycolsep}&      &      &      &      &      &      &      &\ddotp&      &      &      &      &\vdotp&\hspace{-\arraycolsep}) \\
c_n^0   &{} = (\hspace{-\arraycolsep}&      &      &      &      &      &      &      &      &  t   &      &      &      &  ts  &\hspace{-\arraycolsep}) \vspace{6pt}\\
d_{1k}  &{} = (\hspace{-\arraycolsep}& *    &\cdots&    * & *    &\cdots& *    &  st  &\cdots&  st  &  t   &      &      &  ts  &\hspace{-\arraycolsep}) \\
\vdotp\ &{}   (\hspace{-\arraycolsep}&\vdotp&\ddotp&\vdotp&\vdotp&\ddotp&\vdotp&\vdotp&\ddotp&\vdotp&      &\ddotp&      &\vdotp&\hspace{-\arraycolsep}) \\
d_{mk}  &{} = (\hspace{-\arraycolsep}& *    &\cdots&    * & *    &\cdots& *    &  st  &\cdots&  st  &      &      &  t   &  ts  &\hspace{-\arraycolsep}) \vspace{6pt}\\
e_1     &{} = (\hspace{-\arraycolsep}&  st  &\cdots&  st  &  t   &      &      &      &      &      &  st  &\cdots&  st  &  ts  &\hspace{-\arraycolsep}) \\
\vdotp\ &{}   (\hspace{-\arraycolsep}&\vdotp&\ddotp&\vdotp&      &\ddotp&      &      &      &      &\vdotp&\ddotp&\vdotp&\vdotp&\hspace{-\arraycolsep}) \\
e_n     &{} = (\hspace{-\arraycolsep}&  st  &\cdots&  st  &      &      &  t   &      &      &      &  st  &\cdots&  st  &  ts  &\hspace{-\arraycolsep}) \vspace{6pt}\\
f       &{} = (\hspace{-\arraycolsep}&  st  &\cdots&  st  &  st  &\cdots&  st  &  st  &\cdots&  st  &  st  &\cdots&  st  &  s   &\hspace{-\arraycolsep}) \vspace{-4pt}\\
               &{}                          &
\rlap{$\underbrace{\phantom{st\hspace{2\arraycolsep}{\cdots}\hspace{2\arraycolsep}st}\hspace{1.5pt}}_{n}$}\phantom{st}   &&&      
\rlap{$\underbrace{\phantom{st\hspace{2\arraycolsep}{\cdots}\hspace{2\arraycolsep}st}\hspace{1.5pt}}_{n}$}\phantom{st}   &&&      
\rlap{$\underbrace{\phantom{st\hspace{2\arraycolsep}{\cdots}\hspace{2\arraycolsep}st}\hspace{1.5pt}}_{n}$}\phantom{st}   &&&      
\rlap{$\underbrace{\phantom{st\hspace{2\arraycolsep}{\cdots}\hspace{2\arraycolsep}st}\hspace{1.5pt}}_{m}$}\phantom{st}   &&&
\end{array}
\end{gather*}
\caption{Generators and target tuple of the $\smp(T)$ instance for $k\in\rg{3}$. Empty positions encode the element $\n$, and the symbol ``$*$'' indicates that this entry depends on the \dprobfont{Q3SAT} instance.}
\label{fig:gens}
\end{figure}

Now we state what we already mentioned in the definition of $d_{jk}$.
\begin{clm}\label{cl1_mthm}
Let $h \in \subuni G$ such that
\begin{align*}\begin{aligned}
h(i) &\in \{s,st\} &&\text{for all } i\in\rg{2n}, \\
h(2n+i) &=  st &&\text{for all } i\in\rg{n}, \\
h(3n+j) &=  s &&\text{for some } j\in\rg{m}, \\
h(3n+m+1) &=  s. &&\\
\end{aligned}\end{align*}
Let $\rho\colon \{ x_1,\ldots,x_n,y_1,\ldots,y_n \} \to \{\0,\1\}$ be the assignment encoded by $h|_{\rg{2n}}$, i.e.
\begin{align*}\begin{aligned}
\rho(x_i) &:= h(i)   & \text{for } i\in\rg{n},\\
\rho(y_i) &:= h(n+i) & \text{for } i\in\rg{n}.
\end{aligned}\end{align*}
Let $k\in\rg{3}$, $C_{jk}$ be the $k$th literal of the $j$th clause, and $h':=hd_{jk}$.
\begin{enumerate}
\item\label{it1_cl1_mthm} 
If $\rho(C_{jk})=\1$, then
$h$ and $h'$ differ only in the following position: \[ h'(3n+j) = st. \]

\item\label{it2_cl1_mthm} 
Otherwise $h'(i)\gjless s$ for some $i\in\rg{2n}$.
\end{enumerate}
\end{clm}

\begin{proof}
The literal $C_{jk}$ is of the form $z$ or $\neg z$ for some variable $z$. 
Let $\ell\in\rg{2n}$ be the position of $z$ in $(x_1,\ldots,x_n,y_1,\ldots,y_n)$.
First assume $C_{jk}$ is of the form $z$. Then $d_{jk}(\ell) = st$.

\lref{it1_cl1_mthm} Assume  $\rho(C_{jk})=\1$. This means $h(\ell) = st$. Thus $h'(\ell) = st \cdot st = st = h(\ell)$, and $h'(3n+j) = s \cdot t = st$. 
It is easy to see that $h(i)=h'(i)$ for the remaining positions $i\in\rg{3n+m+1}\setminus \{\ell,3n+j\}$.

\lref{it2_cl1_mthm} Assume  $\rho(C_{jk})=\0$. 
This means  $h(\ell) = s$. Thus $h'(\ell) = s \cdot st \gjless s$, and \lref{it2_cl1_mthm} is proved.

If $C_{jk}$ is of the form $\neg z$, then~\lref{it1_cl1_mthm} and~\lref{it2_cl1_mthm} are proved in a similar way.
\end{proof}

Note that if $\rho$ satisfies $C_{jk}$, then multiplying $h$ by $d_{jk}$ changes the status of the $j$th clause from ``unsatisfied'' to ``satisfied''. 
Otherwise the target tuple $f$ cannot be reached by further multiplying $hd_{jk}$ with elements of $G$.

In the remainder of the proof we show the following.
\begin{clm}\label{cl2_mthm}
$\Phi$ holds if and only if $f\in\subuni G$.
\end{clm}

\emph{$(\Rightarrow)$ direction of Claim~\ref{cl2_mthm}.}
Assume $\Phi$ is true. This means that for every $i\in \rg{n}$ there is a function
$\psi_i:\{\0,\1\}^i\to\{\0,\1\}$  such that for every assignment $\varphi:\{x_1,\dots,x_n\}\to\{\0,\1\}$
the assignment
\[ \rho_\varphi:=\varphi\cup\{y_i \mapsto \psi_i(\varphi(x_1),\dots,\varphi(x_i))\mid i\in[n]\} \]
satisfies all the clauses $C_1,\dots, C_m$. 

We prove by induction on assignments $\varphi$ in lexicographic order that for each $\varphi$ the following tuple $g_\varphi$ belongs to $\subuni G$:
\begin{align*}\begin{aligned}\
g_\varphi(i) &:= \varphi(x_i) && \text{for } i\in\rg{n}, \\ 
g_\varphi(n+i) &:= \rho_\varphi(y_i) && \text{for } i\in\rg{n}, \\
g_\varphi(2n+i) &:= st &&\text{for } i\in\rg{n+m}, \\
g_\varphi(3n+m+1) &:= s. &&
\end{aligned}\end{align*}
For the base case let $\varphi(x_i):=\0$ for all $i\in\rg{n}$. 
For $i\in\rg{n}$ we define
\[ c'_i := \begin{cases} 
c_i^+ & \text{if }  \rho_\varphi(y_i) = \1, \\ 
c_i^0 & \text{otherwise.} \end{cases} \]
Apparently $a\cdot c_1'\cdots c_n'|_{\rg{2n}}$ encodes $\rho_{\varphi}$.
For each $j \in \rg{m}$ there is a $k_j \in \rg{3}$ such that the literal $C_{jk_j}$ is satisfied by $\rho_{\varphi}$.
By Claim~\ref{cl1_mthm}\tlref{it1_cl1_mthm} it is straightforward to verify that
\begin{equation*}\label{eq10_mthm}
g_\varphi = a \cdot c'_1\cdots c'_n \cdot d_{1k_1} \cdots d_{mk_m}. 
\end{equation*}

Now let $\varphi$ be an assignment with successor $\varphi'$ in lexicographical order such that $g_\varphi\in \subuni G$. 
Let $j \in \rg{n}$ be maximal such that $\varphi(x_j)=\0$. Then
\begin{align*}\begin{aligned}
 \varphi(x_i) & =\varphi'(x_i) &&\text{for } i<j, \\
 \varphi(x_j) &=\0,\ \varphi'(x_j)=\1, &&\\ 
 \varphi(x_i) &=\1,\ \varphi'(x_i)=\0 &&\text{for } j<i\leq n.
\end{aligned}\end{align*}
To adjust the assignment for the existential variables, for $j\le i\le n$ set
\[  c'_i := \begin{cases}  
c^{+}_i & \text{if } \rho_\varphi(y_i)=\0,\  \rho_{\varphi'}(y_i)=\1, \\
c^{-}_i & \text{if } \rho_\varphi(y_i)=\1,\  \rho_{\varphi'}(y_i)=\0,  \\                       
c^0_i & \text{otherwise.}
\end{cases} \]
For $h:=g_\varphi \cdot b_j \cdot c'_j\cdots c'_n$ we have
\begin{align*}\begin{aligned}
h(i) & =\varphi'(x_i)           &&\text{for } i\in\rg{n}, \\
h(n+i) & = \rho_{\varphi'}(y_i) &&\text{for } i\in\rg{n}, \\ 
h(2n+i) & = st &&\text{for } i\in\rg{n}, \\
h(3n+i) & = s &&\text{for } i\in\rg{m+1}. \\
\end{aligned}\end{align*}
For each $j \in \rg{m}$ the clause $C_j$ is satisfied by $\rho_{\varphi'}$. 
Thus there is a $k_j \in \rg{3}$ such that $\rho_{\varphi'}$ satisfies the literal $C_{jk_j}$.
From Claim~\ref{cl1_mthm}\tlref{it1_cl1_mthm} follows that
\[ g_{\varphi'} = h \cdot d_{1k_1} \cdots d_{mk_m}. \]
This completes the induction argument.

Finally let $\varphi$ be such that $\varphi(x_i)=\1$ for all $i\in\rg{n}$, 
and $g_\varphi$ as defined above.
Denote the positions $i\in\rg{n}$ where $g_\varphi(n+i)=s$ by $i_1,\ldots,i_p$.
Then we have \[f = g_\varphi \cdot e_{i_1} \cdots e_{i_p}.\]
Thus $f\in\subuni G$.
The $(\Rightarrow)$ direction of Claim~\ref{cl2_mthm} is proved.

We give another description of the product that yields $f$. 
For each assignment $\varphi\neq\0$ for $x_1,\dots,x_n$ let 
$j_\varphi := \max\{ j\in\rg{n} \mid \varphi(x_j) = \1 \}$. 
From our argument above we see that $f$ is of the form
\begin{equation} \label{eq20_mthm}
f = a c_1^*\cdots c_n^* d_{1\dagger} \cdots d_{m\dagger} \cdot 
(\prod_{\mathclap{\varphi \neq \0}} b_{j_\varphi} c_{j_\varphi}^* \cdots c_n^* d_{1\dagger} \cdots d_{m\dagger}) 
\cdot e_{i_1} \cdots e_{i_p},
\end{equation}
where $i_1,\ldots,i_p\in\rg{n}$ are distinct, each $*$ belongs to $\{+,-,0\}$, and each $\dagger$ to $\rg{3}$.
The product is taken over all assignments $\varphi \neq \0$ to $x_1,\dots,x_n$ in
lexicographical order. 

\emph{$(\Leftarrow)$ direction of Claim~\ref{cl2_mthm}.}
Assume $ f\in\subuni G$. 
Let $k\in\mathbb{N}$ be minimal such that $f=u_1\cdots u_k$ for some $u_1,\dots, u_k \in G$, 
and let $ v_i:= u_1\cdots u_i$ for $i\in\rg{k}$. 

\begin{clm}\label{cl3_mthm}
Let $i\in\{2,\ldots,k\}$ and $j\in\rg{3n+m+1}$. Then
\begin{enumerate}
\item\label{it1_cl3_mthm}
$ u_1 =  a$ and $ u_i\neq  a$,
\item\label{it2_cl3_mthm}
 $ v_i(j) \in \{s,st\}$.
\item\label{it3_cl3_mthm}
If $u_i(j) \in \{\n,st,ts\}$, then $v_i(j) = v_{i-1}(j)$.
\end{enumerate}
\end{clm}

\begin{proof}
\lref{it1_cl3_mthm}
If $u_i=a$, then $u_{i-1}u_i(3n+m+1) \in \{ts^2, s^2\}$,
which yields the contadiction $v_k(3n+m+1) \gjless s$.
If $u_1 \neq a$, then we obtain the contradiction $v_k(3n+m+1)=ts$.

\lref{it2_cl3_mthm}
We use induction on $i$. By~\lref{it1_cl3_mthm} we have $v_1=a$. Thus $v_1(j) \in \{s,st\}$.
Now assume $v_{i-1}(j)\in\{s,st\}$. 
Since $u_i(j)\in\{\n,s,st,t,ts\}$,
either $v_i(j)\in\{s,st\}$, or a factor $s^2$ or $t^2$ occurs in $v_i(j)$. 
As $s^2,t^2\gjless f(j)$, 
the first case applies.

\lref{it3_cl3_mthm}
is immediate from item~\lref{it2_cl3_mthm} and the multiplication table~\eqref{eq05_mthm}.
\end{proof}

The next claim states that the product $u_1\cdots u_k$ is of a similar form to the one given in~\eqref{eq20_mthm}.

\begin{clm}\label{cl4_mthm}
Let $i\in\rg{k}$. Let either $j\in\rg{n}$ and $u_i=b_j$, or $j=1$ and $u_i=a$. 
Then for $i_1:=i+n-j+1$ and $i_2:=i_1+m$ the following holds:

\begin{enumerate}
\item\label{it1_cl4_mthm}
$\{u_{i+1},\ldots,u_{i_1}\} = \{c_{j}^{p_j},\ldots,c_{n}^{p_n}\}$ for some $p_1,\ldots,p_m \in \{+,-,0\}$.

\item\label{it2_cl4_mthm}
$\{u_{i_1+1},\ldots,u_{i_2}\} = \{d_{1k_1},\ldots,d_{mk_m}\}$ for some $k_1,\ldots,k_m \in \rg{3}$.

\item\label{it3_cl4_mthm}
If there is a greatest $j'\in\rg{n}$ such that $v_i(j')=s$, then $u_{i_2+1} =b_{j'}$;

\item\label{it4_cl4_mthm}
otherwise $u_{i_2+1},\ldots,u_k$ are distinct and form the set \\$\{ e_\ell \mid \ell \in \rg{n},\ v_{i_2}(n+\ell)=s \}$.
\end{enumerate}
\end{clm}
\begin{proof}
First let $j\in\rg{n}$ and $u_i=b_j$.

\lref{it1_cl4_mthm}
From Claim~\ref{cl3_mthm} we know that $i\ge 2$ and that every coordinate of $v_{i-1}$ is either $s$ or $st$. From the definition of $b_j$ and the multiplication table~\eqref{eq05_mthm} we know that
\[v_i(2n+\ell)= \begin{cases} st &\text{for } \ell \in \rg{j-1}, \\ s &\text{for } \ell\in\{j,\ldots,n\}.\end{cases}\]
Thus the only choice for the $n-j+1$ generators subsequent to $u_i$ is given by $c_{j}^{p_j},\ldots,c_{n}^{p_n}$ for some $p_j,\ldots,p_n \in \{+,-,0\}$, where the order does not matter; otherwise we would obtain a factor $s^2$ or $t^2$ in a position $2n+\ell$ for some $\ell\in\rg{n+m}$, which is impossible.

\lref{it2_cl4_mthm}
From~\lref{it1_cl4_mthm} we know that 
\begin{align*}\begin{aligned}
v_{i_1}(2n+\ell)&=st & \text{for } &\ell\in\rg{n}, \\
v_{i_1}(3n+\ell)&=s  & \text{for } &\ell\in\rg{m}.
\end{aligned}\end{align*}
Thus the $m$ generators subsequent to $u_{i_1}$ are given by $d_{1k_1},\ldots,d_{mk_m}$ for some $k_1,\ldots,k_m \in \rg{3}$ where the order does not matter;
otherwise we would obtain a factor $t^2$ in $v_k(2n+\ell)$ for some $\ell\in\rg{n}$, or $s^2$ in $v_k(3n+\ell)$ for some $\ell \in \rg{m}$.
Both cases contradict the fact that $v_k=f$.

\lref{it3_cl4_mthm}
Let $j'\in\rg{n}$ be maximal such that $v_i(j')=s$.
From~\lref{it1_cl4_mthm} and~\lref{it2_cl4_mthm} we know that 
\begin{align*}\begin{aligned}
	v_{i_2}(j')&=s &&\text{and} \\
   v_{i_2}(2n+\ell)&=st  && \text{for } \ell\in\rg{n+m}. \\
\end{aligned}\end{align*}
If $u_{i_2+1} \in E$, then $v_{i_2+1}(j')= s^2t$.
If $u_{i_2+1} \in C\cup D$, then $v_{i_2+1}(2n+\ell)= st^2$ for some $\ell\in\rg{n+m}$.
Thus $u_{i_2+1}=b_\ell$ for some $\ell\in\rg{n}$.
If $\ell<j'$, then $v_{i_2+1}(j')=s^2$.
If $\ell>j'$, then $v_{i_2+1}(\ell)= st^2$.
Therefore $\ell=j'$.

\lref{it4_cl4_mthm}
Assume $v_i(\ell)=st$ for all $\ell\in\rg{n}$. 
Suppose some generator among $u_{i_2+1},\ldots,u_k$ belongs to $B$.
Let $i_3\in\{{i_2+1},\ldots,k\}$ be minimal such that $u_{i_3} \in B$.
By~\lref{it1_cl4_mthm}, \lref{it2_cl4_mthm}, and 
Claim~\ref{cl3_mthm}\tlref{it3_cl3_mthm} 
we have \[ v_i|_{\rg{n}}=\ldots=v_{i_3-1}|_{\rg{n}}. \]
Thus $v_{i_3}(\ell)=st^2$ for some $\ell\in\rg{n}$, which is impossible.
Hence $u_{i_2+1},\ldots,u_k \notin B$.
This together with~\lref{it1_cl4_mthm} and~\lref{it2_cl4_mthm} implies
\[\text{$v_{i_2}(2n+\ell)=\ldots=v_k(2n+\ell)=st$\quad{}for all $\ell\in\rg{n+m}$.}\]
Thus $u_{i_2+1},\ldots,u_k \notin C\cup D$. Otherwise we would have a factor $t^2$ in $v_k(n+\ell)$.
So $u_{i_2+1},\ldots,u_k \in E$.
If $u_{i_2+1},\ldots,u_k$ were not distinct, then we had a factor $t^2$ in $v_k(n+\ell)$ for some $\ell\in\rg{n}$. Finally observe that for each $\ell\in\rg{n}$ with $v_{i_2}(n+\ell)=s$ we have $e_\ell\in \{u_{i_2+1},\ldots,u_k\}$; otherwise $v_k(n+\ell)=s$ which is impossible. We proved \lref{it4_cl4_mthm}.

For $j=1$ and $u_i=a$, items \lref{it1_cl4_mthm}~to~\lref{it4_cl4_mthm} are proved in a similar manner.
\end{proof}

In the following we define assignments to the variables using the first $2n$ positions of the tuples $v_1,\ldots,v_k$.
For $i\in\rg{k}$ and $j\in\rg{n}$ let 
\begin{align*}\begin{aligned}
\varphi_i \colon \{x_1,\dots,x_n\} &\to \{\0,\1\},&
\varphi_i(x_j) &:= v_i(j), \\
\theta_i \colon \{y_1,\dots,y_n\} &\to\{\0,\1\},&
\theta_i(y_j) &:= v_i(n+j). \\
\end{aligned}\end{align*}
These assignments fulfill the following conditions.

\begin{clm}\label{cl5_mthm} \mbox{}
\begin{enumerate}

\item\label{it0_cl5_mthm} 
For $i\in\rg{k-1}$ we have $\varphi_i\ne\varphi_{i+1}$ if and only if $u_{i+1}\in B$.

\item \label{it1_cl5_mthm}
$\varphi_1,\dots,\varphi_{k}$ is a list of all assignments for $x_1,\dots,x_n$ (possibly with repetitions) in lexicographic order.

\item \label{it2_cl5_mthm}
Let $i\in\rg{k-1}$ such that $u_i\in D$ and $u_{i+1}\notin D$.
Then $\varphi_{i}\cup\theta_{i}$ satisfies all the clauses $C_1,\dots,C_m$.
\end{enumerate}
\end{clm}

\begin{proof}
\lref{it0_cl5_mthm} follows from the definitions of the generators, Claim~\ref{cl3_mthm}\tlref{it3_cl3_mthm}, and Claim~\ref{cl4_mthm}.

\lref{it1_cl5_mthm} 
Let $i\in\rg{k-1}$ such that $\varphi_i \ne \varphi_{i+1}$.
By \lref{it0_cl5_mthm}  and Claim~\ref{cl4_mthm} $u_{i+1}=b_j$ for the greatest $j\in\rg{n}$ for which $\varphi_i(x_j)=\0$. 
It is easy to see that $\varphi_{i+1}(x_\ell)=\varphi_i(x_\ell)$ for $\ell<j$, 
$\varphi_{i+1}(x_j)=\1$, and $\varphi_{i+1}(x_\ell)=\0$ for $\ell>j$. 
Thus $\varphi_{i+1}$ is the successor of $\varphi_i$ in lexicographic order.
By Claim~\ref{cl3_mthm}\tlref{it1_cl3_mthm} $\varphi_1$ is the all-zero assignment for $x_1,\ldots,x_n$. 
Since $v_k(i)=st$ for all $i\in\rg{n}$, $\varphi_k$ is the all-one assignment.
Hence $\varphi_1,\dots,\varphi_{k}$ is a list of all assignments.

\lref{it2_cl5_mthm}
By Claim~\ref{cl4_mthm} $\{u_{i-m+1},\ldots,u_{i}\} = \{d_{1k_1},\ldots,d_{mk_m}\}$ for some 
$k_1,\ldots,k_m \in \rg{3}$. 
Thus $\varphi_{i-m}\cup\theta_{i-m}=\ldots=\varphi_{i}\cup\theta_{i}$.
Suppose $\varphi_{i}\cup\theta_{i}$ does not satisfy some clause $C_j$ for $j\in\rg{m}$.
Then its $k$th literal $C_{jk_j}$ is also unsatisfied.
Claim~\ref{cl1_mthm}\tlref{it2_cl1_mthm} implies $v_i(\ell)\gjless s$ for some $\ell\in\rg{2n}$, which is impossible.
\end{proof}

For $\varphi\colon\{x_1,\dots,x_n\}\to\{\0,\1\}$ let 
\[i_\varphi := \max\{ i\in\rg{k} \mid \varphi_i = \varphi,\, u_i\in D \}.\]
From Claim~\ref{cl5_mthm} we know that for every assignment $\varphi$ to $x_1,\ldots,x_n$ the assignment $\varphi\cup\theta_{i_\varphi}$ satisfies all the clauses of $\Phi$.
It only remains to prove that $\theta_{i_\varphi}(y_i)$ only depends on $\varphi(x_1),\ldots,\varphi(x_i)$.

\begin{clm}\label{cl6_mthm}
Let $i\in\rg{n}$. For all $\varphi,\chi\colon\{x_1,\dots,x_n\}\to\{\0,\1\}$ the equations
\begin{equation}\label{eq1_cl6_mthm}
\varphi(x_1) = \chi(x_1),\ \ldots,\ \varphi(x_i) = \chi(x_i) 
\end{equation}
imply $\theta_{i_\varphi}(y_i) =  \theta_{i_\chi}(y_i)$.
\end{clm}

\begin{proof}
We consider $\varphi$ as fixed and prove the implication for all $\chi\ge\varphi$ 
by induction in lexicographical order.
The base case $\chi=\varphi$ is clear.

Now let $\chi\ge\varphi$ be an assignment for which the implication holds,
and assume its successor $\chi'$ fulfills~\eqref{eq1_cl6_mthm}.
Since $\varphi \le \chi < \chi'$, the assignment $\chi$ also fulfills~\eqref{eq1_cl6_mthm}.
From Claim~\ref{cl4_mthm} we know that
\begin{align}\label{eq2_cl6_mthm}
\{u_{i_{\chi}+1},\ldots,u_{i_{\chi'}}\}=\{b_j,c_j^{p_j},\ldots,c_n^{p_n},d_{1k_1},\ldots,d_{mk_m}\}
\end{align} 
for some $j\in\rg{n}$, $p_j,\ldots,p_n\in\{+,-,0\}$, and $k_1,\ldots,k_m\in\rg{3}$.
If $j\le i$ was true, then $\chi'$ would not fulfill $\eqref{eq1_cl6_mthm}$.
Thus $j>i$. From~\eqref{eq2_cl6_mthm} follows $v_{i_{\chi'}}(n+i)=v_{i_\chi}(n+i)$.
Thus $\theta_{i_{\chi'}}(y_i)=\theta_{i_\chi}(y_i)=\theta_{i_\varphi}(y_i)$, and Claim~\ref{cl6_mthm} is proved.
\end{proof}

We complete the proof of Claim~\ref{cl2_mthm}. By Claims~\ref{cl5_mthm} and~\ref{cl6_mthm}, 
for each assignment $\varphi$ for the universal variables there is an assignment 
$\theta_{i_\varphi}$ for the existential variables such that $\varphi\cup\theta_{i_\varphi}$ 
satisfies the conjunctive normal form in $\Phi$. For all $i\in\rg{n}$ the value $\theta_{i_\varphi}(y_i)$ depends only on $\varphi(x_1),\dots,\varphi(x_i)$.
Thus $\Phi$ is true. Claim~\ref{cl2_mthm} and Lemma~\ref{lma:mthm_pspace} are proved.
\end{proof}

\begin{proof}[Proof of Theorem~\ref{thm:mthm_pspace}]
Let $s':=(s,tst)$, $t':=(tst,s)$, and $\n':=(\n,\n)$ be elements of $S^2 := S \times S$. 
Apparently $s't's'=s'$ and $t's't'=t'$.
Both $s'$ and $t'$ do not generate groups.
By Lemma~\ref{lma:j_h_classes_1} $s'^2\gjless s'$ and $t'^2\gjless t'$.
Since $t' \gj s'$, we have $t'^2\gjless s'$.
Now $s',t',\n'$ fulfill the hypothesis of Lemma~\ref{lma:mthm_pspace}.
Thus $\smp(S^2)$ is \pspace-complete.
As $\smp(S^2)$ reduces to $\smp(S)$ and conversely, the result follows.
\end{proof}

Now we are able to list several ``naturally occuring'' semigroups with \pspace-complete \smp: 
\begin{clr}\label{clr1_mthm}
The \smp{} for the following semigroups is \pspace-complete:
\begin{enumerate}
\item\label{it1_clr1_mthm}
the \emph{Brandt monoid}  $B_2^1$ and the monoid $A_2^1$;
\item\label{it2.5_clr1_mthm}
for $n\ge 2$ and a finite ring $R$ with identity $1\neq0$, the semigroup of all $n \times n$ matrices over $R$;
\item\label{it3_clr1_mthm}
the full transformation semigroup $\Tsg_n$ on $n\ge 3$ letters;
\item\label{it4_clr1_mthm}
the symmetric inverse semigroup $\Isg_n$ on $n\ge 2$ letters.
\end{enumerate}
\end{clr}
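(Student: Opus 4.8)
The plan is to read every item off Theorem~\ref{thm:mthm_pspace}: for the semigroup $S$ at hand I would exhibit a triple $s,t,\n\in S$ with $sts=s$, with $s$ not generating a group (equivalently $s^2\gjless s$, by Lemma~\ref{lma:j_h_classes_1}), and with $s\n=s$ and $t\n=t$; the \pspace{} upper bound is already part of the conclusion of Theorem~\ref{thm:mthm_pspace}. As an alternative route for the larger semigroups, I would use that whenever $T$ is a subsemigroup of $S$, any $\smp(T)$ instance $A\subseteq T^n$, $b\in T^n$ is verbatim an $\smp(S)$ instance with the same answer, since $\subuni A$ is the same computed in $T^n$ or in $S^n$; hence if $\smp(T)$ is \pspace-complete then $\smp(S)$ is \pspace-hard, and Theorem~\ref{thm:pspace} supplies membership.

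First I would settle item~\lref{it1_clr1_mthm}. In $B_2^1$ set $\n:=1$ (the adjoined identity), $s:=[1,2]$, and $t:=[2,1]$; using the multiplication rule for $P=\left(\begin{smallmatrix}1&0\\0&1\end{smallmatrix}\right)$ one gets $st=[1,1]$, hence $sts=[1,1][1,2]=[1,2]=s$, while $s^2=0$, so $\subuni s=\{s,0\}$ is not a group, and $s\n=s$, $t\n=t$ hold since $\n$ is an identity. Thus Theorem~\ref{thm:mthm_pspace} gives that $\smp(B_2^1)$ is \pspace-complete. For $A_2^1$, where $P=\left(\begin{smallmatrix}1&1\\1&0\end{smallmatrix}\right)$, I would instead take $\n:=1$, $s:=[2,2]$, $t:=[1,1]$; then $st=[2,1]$, $sts=[2,1][2,2]=[2,2]=s$, and again $s^2=0$, so Theorem~\ref{thm:mthm_pspace} applies. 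Two separate witnesses are unavoidable: $B_2$ and $A_2$ have five elements but differ in their number of idempotents, hence are not isomorphic, and likewise $B_2^1$ and $A_2^1$, so neither can be reduced to the other by a subsemigroup embedding.

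For the remaining three items I would produce the triple directly inside the larger semigroup. In $M_n(R)$ with $n\ge2$ take $\n:=I_n$, $s:=E_{12}$, $t:=E_{21}$ (matrix units): then $sts=E_{12}E_{21}E_{12}=E_{11}E_{12}=E_{12}=s$, while $s^2=E_{12}^2=0$, so $\subuni s=\{s,0\}$ is a two-element null semigroup and not a group --- here the hypothesis $1\ne0$ is exactly what makes $E_{12}\ne0$ --- and $sI_n=s$, $tI_n=t$; this gives item~\lref{it2.5_clr1_mthm}. In $\Isg_n$ with $n\ge2$ take $\n:=\id$ and let $s$, $t$ be the partial injections of $\{1,\dots,n\}$ sending $1$ to $2$, respectively $2$ to $1$, and undefined elsewhere; then $sts=s$, $s^2$ is the empty map (so $s$ does not generate a group), and $s\,\id=s$, $t\,\id=t$, giving item~\lref{it4_clr1_mthm}. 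For item~\lref{it3_clr1_mthm}, in $\Tsg_n$ with $n\ge3$ take $\n:=\id$ and let $s$, $t$ fix the letters $4,\dots,n$ and act on $\{1,2,3\}$ by $s\colon 1\mapsto 2,\ 2\mapsto 3,\ 3\mapsto 3$ and $t\colon 1\mapsto 1,\ 2\mapsto 1,\ 3\mapsto 2$; one checks $sts=s$, while $s^2$ sends all of $\{1,2,3\}$ to $3$, so $\subuni s=\{s,s^2\}$ with $s^3=s^2\ne s$ is not a group, and $s\,\id=s$, $t\,\id=t$. Alternatively, item~\lref{it3_clr1_mthm} follows from item~\lref{it1_clr1_mthm} via the subsemigroup embeddings $B_2^1\hookrightarrow\Tsg_3\hookrightarrow\Tsg_n$ noted in the introduction.

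I do not expect a conceptual obstacle: each item comes down to a finite check of the three hypotheses of Theorem~\ref{thm:mthm_pspace} for the stated triple. The only points that call for a little care are the asymmetry in item~\lref{it1_clr1_mthm} (genuinely distinct witnesses for $B_2^1$ and $A_2^1$), the choice in item~\lref{it3_clr1_mthm} of a transformation of three letters that generates a non-group while still admitting a partner $t$ with $sts=s$, and keeping track, in item~\lref{it2.5_clr1_mthm}, of where the assumption $1\ne0$ enters.
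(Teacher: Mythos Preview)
Your proposal is correct and follows essentially the same approach as the paper: for each semigroup you exhibit an explicit triple $s,t,\n$ satisfying the hypotheses of Theorem~\ref{thm:mthm_pspace}, with the very same witnesses in items~\lref{it1_clr1_mthm}, \lref{it2.5_clr1_mthm}, \lref{it4_clr1_mthm} and a slightly different (but equally valid) choice of $t$ in item~\lref{it3_clr1_mthm}. The additional remarks on subsemigroup reductions and on $B_2^1\not\cong A_2^1$ are correct but not needed for the argument.
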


\begin{proof}
We apply Theorem~\ref{thm:mthm_pspace}.

\lref{it1_clr1_mthm} 
For $B_2^1$ let $s := [1,2]$ and $t := [2,1]$.
For $A_2^1$ let $s := [2,2]$ and $t := [1,1]$.

\lref{it2.5_clr1_mthm} 
Define $n \times n$ matrices $s,t$ over $R$ by
\begin{align*} 
s_{ij} := \begin{cases} 1 &\text{if } (i,j)=(1,2), \\ 0 &\text{otherwise,} \end{cases} \qquad
t_{ij} := \begin{cases} 1 &\text{if } (i,j)=(2,1), \\ 0 &\text{otherwise} \end{cases}
\end{align*}
for $i,j\in\rg{n}$. Let $\n$ be the identity matrix.

\lref{it3_clr1_mthm} 
Let $\n$ be the identity mapping on $\rg{n}$, and $s,t \colon \rg{n}\rightarrow\rg{n}$, 
\begin{align*}
s(x) &:= \begin{cases} 2 &\text{if } x=1, \\ 3&\text{otherwise,} \end{cases} \qquad
t(x) := \begin{cases} 1 &\text{if } x=2, \\ 3&\text{otherwise.} \end{cases}
\end{align*}

\lref{it4_clr1_mthm}
Let $\n$ be the identity mapping, $s\colon 1 \mapsto 2$, and $t\colon 2 \mapsto 1$.
\end{proof}

For monoids we can now generalize Corollary~\ref{clr:hardness_jclass_argument}:
\begin{clr}
If a $\gj$-class of a finite monoid $S$ contains both group and non-group $\gh$-classes, then $\smp(S)$ is \pspace-complete.
\end{clr}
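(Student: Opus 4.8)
The plan is to imitate the proof of Corollary~\ref{clr:hardness_jclass_argument}, only replacing the appeal to Theorem~\ref{thm:nphard_condition} by one to Theorem~\ref{thm:mthm_pspace}. First I would pick an element $s$ of the given $\gj$-class whose $\gh$-class $H_s$ is not a group. Since that $\gj$-class also contains a group $\gh$-class, it contains an idempotent, hence it is a regular $\gj$-class and, as $\gj=\gd$ in a finite semigroup, $s$ is a regular element by \cite[Proposition 2.3.1]{Howie1995}: there is $u\in S$ with $sus=s$. Moreover, exactly as in Corollary~\ref{clr:hardness_jclass_argument}, Green's Theorem \cite[Theorem 2.2.5]{Howie1995} shows that $s$ does not generate a group, because $H_s$ contains no idempotent.

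Next I would set $t:=u$ and take $\n:=1$ to be the identity of the monoid $S$. Then $sts=sus=s$, so hypothesis~(1) of Theorem~\ref{thm:mthm_pspace} holds; hypothesis~(2) is the statement just established that $s$ does not generate a group; and hypothesis~(3) is immediate, since $1$ is a two-sided identity and therefore $s\n=s$ and $t\n=t$. Invoking Theorem~\ref{thm:mthm_pspace} then gives that $\smp(S)$ is \pspace-complete, which is the claim.

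I do not expect a real obstacle here: the whole content of the corollary is that, in contrast with the semigroup situation of Corollary~\ref{clr:hardness_jclass_argument}, a monoid comes equipped for free with an element $\n$ acting as a right identity on $s$ and $t$ — precisely the extra datum that Theorem~\ref{thm:mthm_pspace} requires over Theorem~\ref{thm:nphard_condition}. The only facts that need checking, namely that $s$ is regular in a regular $\gj$-class and that a non-group $\gh$-class contains no element generating a group, are standard and are already used in the proof of Corollary~\ref{clr:hardness_jclass_argument}, so they can be quoted rather than reproved.
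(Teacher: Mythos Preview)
Your proposal is correct and follows essentially the same route as the paper's own proof: pick $s$ in a non-group $\gh$-class of the given $\gj$-class, use regularity to obtain $t$ with $sts=s$, take $\n$ to be the monoid identity, and invoke Theorem~\ref{thm:mthm_pspace}. The paper's proof is terser but identical in content.
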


\begin{proof}
Let $S$ be as above.
Similar to the proof of Corollary~\ref{clr:hardness_jclass_argument}, there is a $t\in S$ such that $sts=s$.
Now $s$, $t$, and the identity fulfill the hypothesis of Theorem~\ref{thm:mthm_pspace}.
\end{proof}

\section{Proof of Theorem~\ref{thm:trichotomy_comb_rmsg_id}}
\label{sec:rmsg_id}

\begin{lma}\label{lma:crms_id_in_np}
If the $0$-$1$ matrix $P$ of a finite combinatorial Rees matrix semigroup $S_P$ has one block, then $\smp(S_P^1)$ is in \np.
\end{lma}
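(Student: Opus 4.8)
The plan is to reduce the case of $S_P^1$ with $P$ having one block to a situation that is essentially a combinatorial Rees matrix semigroup (for which $\np$-easiness is already known by Lemma~\ref{lma:combrmsg3_np}), together with easily-analyzable contributions coming from the adjoined identity. First I would fix an instance $\{a_1,\dots,a_k\}\subseteq (S_P^1)^n$, $b\in(S_P^1)^n$. Since $P$ has one block, say with respect to $J\subseteq I$ and $\Delta\subseteq\Lambda$, a nonzero product $[i,\lambda][j,\mu]$ equals $[i,\mu]$ whenever $\lambda\in\Delta$ and $j\in J$, and $0$ otherwise; the adjoined identity $1$ acts trivially. So in any product $g_1\cdots g_\ell$ that equals $b$, each coordinate evolves very rigidly: once we delete the coordinates that are $1$ in a given factor (they have no effect there), the remaining behaviour is that of a one-block combinatorial Rees matrix semigroup. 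The key observation to make precise is that a word over the generators can be "shortened" coordinatewise, but the obstruction is that shortening must be done simultaneously in all $n$ coordinates.

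The main step is to prove a length bound: if $b\in\subuni{a_1,\dots,a_k}$, then $b=g_1\cdots g_\ell$ for some sequence of generators of length polynomial in $k$ (independent of $n$). Here I would mimic the proof of Lemma~\ref{lma:combrmsg3}, but now working with the alphabet $\{x_1,\dots,x_k\}$ of generator-names and with the evaluation map into $S_P^1$ rather than into a bare Rees matrix semigroup. The difference is the adjoined identity: a factor $x_i$ may act as the identity on some coordinates and as a Rees-matrix element on others. I would split the coordinate set $\rg n$ according to, for each generator $x_i$, whether $a_i$ is $1$ there or not, and argue that within each coordinate the product is governed by the "edge set" $E(\cdot)$ of Lemma~\ref{lma:combrmsg2} restricted to the non-identity factors, plus a finite amount of "boundary'' data (which generator is first/last non-identity in that coordinate). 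Since there are only $k$ generators, the number of distinct $2$-letter patterns is at most $k^2$, and as in Lemma~\ref{lma:combrmsg3} we can delete any repeated "block'' of the word whose removal does not change the edge set of any coordinate nor the relevant boundary letters; this caps each generator's number of occurrences by a polynomial in $k$, giving total length $\mathrm{poly}(k)$.

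Once the length bound is in place, $\np$-easiness is immediate by the same argument as Lemma~\ref{lma:combrmsg3_np}: guess a witnessing word $g$ of polynomial length, evaluate $g(a_1,\dots,a_k)$ in time polynomial in $k$ and $n$, and check it equals $b$.

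The hard part will be the bookkeeping in the length-reduction step, specifically handling the adjoined identity correctly: unlike in Lemma~\ref{lma:combrmsg3}, where the semigroup operation is "absorptive'' in a uniform way, here a generator can be neutral on some coordinates and active on others, so the naive edge-set argument must be refined to track, per coordinate, the first and last \emph{active} occurrence and to ensure that deleting a block does not accidentally make some coordinate's product become $0$ when it should be nonzero, or vice versa. I expect one needs to argue that it suffices to preserve, in each coordinate, (i) the set of adjacent active-letter pairs and (ii) the first and last active letters, and that a word can always be trimmed to satisfy a polynomial bound while preserving all of these simultaneously, exactly because there are only $k$ letters and hence boundedly many such constraints to respect. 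Managing all coordinates at once — rather than one at a time, which is what the proof of Lemma~\ref{lma:combrmsg3} effectively does for a single copy — is the crux, but the one-block hypothesis makes each coordinate's dynamics simple enough that a uniform trimming works.
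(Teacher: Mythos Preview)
Your approach is genuinely different from the paper's, and as written it has a real gap.

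The paper does not attempt any word-shortening in terms of the number of generators. Given $b = a_1 \cdots a_k \in \subuni{A}$ (here $a_1,\ldots,a_k \in A$, so $k$ is the length of some witnessing product), it simply selects for each coordinate $i$ with $b(i)\ne 1$ at most two indices $\ell_i < r_i$ in $\rg{k}$: when $b(i)\in I\times\Lambda$ these are the first and last positions whose factor is $\ne 1$ at $i$; when $b(i)=0$ they are two positions whose factors multiply to $0$ with only $1$'s strictly between them. Collecting all these indices into a set $N\subseteq\rg{k}$ of size at most $2n$, one checks directly from the one-block structure that $\prod_{j\in N} a_j = b$. So the witness has length at most $2n$, trivially polynomial in the input, and the entire argument is a few lines.

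Your plan aims instead for a bound polynomial in $|A|$ and \emph{independent of $n$}, by adapting Lemma~\ref{lma:combrmsg3}. The gap is in the step ``there are only $k$ letters and hence boundedly many such constraints to respect''. The edge-set and first/last-letter data you want to preserve is that of the \emph{compressed} word obtained by deleting the factors equal to $1$ at a given coordinate --- and which letters get deleted depends on the coordinate. Since the activity pattern $\{j : a_j(i)\ne 1\}$ can in principle be any of the $2^{|A|}$ subsets of the generator set, you are asking to simultaneously control the edge-sets and endpoints of up to $2^{|A|}$ different compressed words, not $O(|A|^2)$ constraints. Lemma~\ref{lma:combrmsg3} avoids this precisely because there is no identity element: the compressed word is $w$ itself, the same at every coordinate, so a single identity $f\approx g$ handles all powers at once. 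Whether a polynomial-in-$|A|$ witness always exists for one-block $S_P^1$ is a nontrivial claim your sketch does not settle; the paper sidesteps it entirely by letting the witness length grow with $n$.
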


\begin{proof}
Assume $P\in\{0,1\}^{\Lambda \times I}$, and let $J\subseteq I$ and $\Delta \subseteq \Lambda$ 
such that $P(\lambda,i)=1$ if and only if $(\lambda,i) \in \Delta \times J$ for $i\in I$, $\lambda \in \Lambda$.
Let $T:=S_P^1$ and $A \subseteq T^n,\, b\in T^n$ be an instance of $\smp(T)$ such that $b\in\subuni{A}$.
Let $a_1,\ldots,a_k \in A$ such that $b=a_1\cdots a_k$. 
If $b=(1,\ldots,1)$ or $k=1$, then clearly $b\in A$. In this case the position of $b$ in the list $A$ is a witness.
Assume $b\ne (1,\ldots,1)$ and $k\ge2$.

We claim that for $i\in\rg{n}$ with $b(i)=0$ 
there are $\ell_i,r_i \in\rg{k}$, $\ell_i < r_i$ such that 
\begin{equation}\label{eq1_lma:crmm_oneblock_in_np}
a_{\ell_i} a_{r_i}(i) = 0 \quad\text{and}\quad a_{\ell_i+1}(i)=\ldots=a_{r_i-1}(i) = 1. 
\end{equation}
This follows from Lemma~\ref{lma:combrmsg1}\tlref{it1_lma:combrmsg1}. 
For $i\in\rg{n}$ with $b(i)\in I \times \Lambda$ let
\begin{align*} 
\ell_i := \min&\{ j\in\rg{k} \sst a_j(i) \ne 1 \}, \\
r_i := \max&\{ j\in\rg{k} \sst a_j(i) \ne 1 \}.
\end{align*}
Now define an index set $N \subseteq \rg{k}$ by
\[ N := \{ \ell_i \sst i\in\rg{n},\,b(i)\ne 1 \} \cup \{ r_i \sst i\in\rg{n},\,b(i)\ne 1 \}. \]
Note that $N\neq\varnothing$; otherwise $b=(1,\ldots,1)$ which contradicts our assumption. 

For $i\in\rg{n}$ we claim that 
\begin{equation}\label{eq2_lma:crmm_oneblock_in_np} 
\prod_{j\in N}a_j(i) = b(i),
\end{equation}
where the indexes $j$ of the factors are in ascending order.
If $b(i)=1$, then $a_j(i)=1$ for all $j\in\rg{k}$, and \eqref{eq2_lma:crmm_oneblock_in_np} follows.
Assume $b(i)=0$. We have $\ell_i,r_i \in N$.
By \eqref{eq1_lma:crmm_oneblock_in_np} all factors in \eqref{eq2_lma:crmm_oneblock_in_np} between $a_{\ell_i}(i)$ and $a_{r_i}(i)$  are equal to $1$. 
This and \eqref{eq1_lma:crmm_oneblock_in_np} imply \eqref{eq2_lma:crmm_oneblock_in_np}.
Finally assume $b(i)\in I\times\Lambda$. 
For $\ell_i < j < r_i$ we have $a_j(i)\in \{1\}\cup(J\times\Delta)$; otherwise we obtain the contradiction $b(i)=0$.
Thus
\begin{align*}\begin{aligned}
\prod_{\mathclap{\substack{ j\in N \\ \ell_i \le j \le r_i }}} a_j(i) 
= \prod_{\mathclap{\substack{ \ell_i \le j \le r_i }}} a_j(i).
\end{aligned}\end{align*}
Since $a_j(i)=1$ for $j<\ell_i$ and $j>r_i$, \eqref{eq2_lma:crmm_oneblock_in_np} follows.

The length of the product in \eqref{eq2_lma:crmm_oneblock_in_np} is $|N|$ and thus at most $2n$.
Thus this product is a valid witness for $b \in\subuni{A}$, and the lemma is proved.
\end{proof}

\begin{proof}[Proof of Theorem~\ref{thm:trichotomy_comb_rmsg_id}]
Assume $P\in\{0,1\}^{\Lambda \times I}$.

\lref{it1_thm:trichotomy_comb_rmsg_id} 
If $P$ is the all-$1$ matrix, then $S_P^1$ is a band (idempotent semigroup) 
with $\gj$-classes $\{0\}$, $I \times \Lambda$, and $\{1\}$.
We show that $S_P^1$ is a \index{regular!band}\emph{regular band}, that is, $S_P^1$ satisfies the identity
\begin{equation}\label{eq1_thm:trichotomy_comb_rmsg_id}
xyxzx=xyzx.
\end{equation}
Let $x,y,z \in S_P^1$. 
If one of the variables is $0$ or $1$, then \eqref{eq1_thm:trichotomy_comb_rmsg_id} clearly holds.
If $x,y,z \in I\times\Lambda$, then $xyxzx = x = xyzx$ by the definition of the multiplication.
Thus $S_P^1$ is a regular band. By \cite[Corollary 1.7]{smpbands} the \smp{} for every regular band is in \ptime.

\lref{it2_thm:trichotomy_comb_rmsg_id}
Assume $P$ has one block and some entries are $0$.
Let $i\in I$ and $\lambda\in\Lambda$ such that $P(\lambda,i)=0$. Let $s:=[i,\lambda]$ and $r:=t:=1$.
Since $s$ does not generate a group, $\smp(S_P^1)$ is \np-hard by Theorem~\ref{thm:nphard_condition}.
\np-easiness follows from Lemma~\ref{lma:crms_id_in_np}.

\lref{it3_thm:trichotomy_comb_rmsg_id}
In this case $P$ does not have one block.
Thus there are $i,j \in I$ and $\lambda,\mu \in \Lambda$ such that
\[ P(\lambda,i) = P(\mu,j) = 1 \quad\text{and}\quad P(\lambda,j)=0. \]
Let $s:=[j,\lambda]$ and $t:=[i,\mu]$.
Then $s$ does not generate a group, $sts=s$, $s1=s$, and $t1=t$.
By Theorem~\ref{thm:mthm_pspace} $\smp(S)$ is \pspace-complete.
\end{proof}

\section{Conclusion}
\label{sec:conclusion}

In Section~\ref{sec:comb_reesmatrix} we established a \ptime/\np-complete dichotomy for combinatorial Rees matrix semigroups. 
The next goal is to investigate the complexity for the more general case of Rees matrix semigroups.
For Rees matrix semigroups without $0$ a polynomial time algorithm for the \smp{} is known \cite{SteindlPhD2015}.
However, the following questions are open:

\begin{prb}
Is the \smp{} for finite Rees matrix semigroups (with $0$) in \np?
In particular, is there a \ptime/\np-complete dichotomy?
\end{prb}

In Section~\ref{sec:pspace} we saw the first example of a semigroup with \np-complete \smp{} where adjoining an identity results in a \pspace-complete \smp{}. 
This leads to the following question:

\begin{prb}
How hard is the \smp{} for finite Rees matrix semigroups with adjoined identity?
\end{prb}

\noindent
The answer is not even known for 
the completely regular case.
E.g.\ the complexity for the following 9-element semigroup is open:
\begin{prb}
Let $1,c$ be the elements of the cyclic group $\mathbb{Z}_2$ such that $c^2=1$,
and define a Rees matrix semigroup $S := \rmsg{\mathbb{Z}_2}{\rg{2}}{\rg{2}}
{\left(\begin{smallmatrix} 1 & 1 \\ 1 & c \end{smallmatrix}\right)}$.
How hard is $\smp(S^1)$?
\end{prb}

\bibliographystyle{abbrv} % alpha, abbrv, amsalpha
%\bibliography{bib}

\begin{thebibliography}{10}

\bibitem{BMS2015}
A.~Bulatov, P.~Mayr, and M.~Steindl.
\newblock The subpower membership problem for semigroups, submitted.
\newblock Available at \verb+http://arxiv.org/pdf/1603.09333v1.pdf+.

\bibitem{Cook1971}
S.~A. Cook.
\newblock Characterizations of pushdown machines in terms of time-bounded
  computers.
\newblock {\em J. Assoc. Comput. Mach.}, 18:4--18, 1971.

\bibitem{Howie1995}
J.~M. Howie.
\newblock {\em Fundamentals of semigroup theory}, volume~12 of {\em London
  Mathematical Society Monographs. New Series}.
\newblock The Clarendon Press, Oxford University Press, New York, 1995.
\newblock Oxford Science Publications.

\bibitem{IMMVW2010}
P.~Idziak, P.~Markovi{\'c}, R.~McKenzie, M.~Valeriote, and R.~Willard.
\newblock Tractability and learnability arising from algebras with few
  subpowers.
\newblock {\em SIAM J. Comput.}, 39(7):3023--3037, 2010.

\bibitem{Kozik2008}
M.~Kozik.
\newblock A finite set of functions with an {EXPTIME}-complete composition
  problem.
\newblock {\em Theoretical Computer Science}, 407(1\textendash3):330--341,
  2008.

\bibitem{Mayr2012}
P.~Mayr.
\newblock The subpower membership problem for {M}al'cev algebras.
\newblock {\em Internat. J. Algebra Comput.}, 22(7):1250075, 23, 2012.

\bibitem{Pa:CC}
C.~H. Papadimitriou.
\newblock {\em Computational complexity}.
\newblock Addison-Wesley Publishing Company, Reading, MA, 1994.

\bibitem{Savitch1970}
W.~J. Savitch.
\newblock Relationships between nondeterministic and deterministic tape
  complexities.
\newblock {\em J. Comput. System. Sci.}, 4:177--192, 1970.

\bibitem{SS2006}
S.~Seif and C.~Szab{\'o}.
\newblock Computational complexity of checking identities in 0-simple
  semigroups and matrix semigroups over finite fields.
\newblock {\em Semigroup Forum}, 72(2):207--222, 2006.

\bibitem{smpbands}
M.~Steindl.
\newblock The subpower membership problem for bands, submitted.
\newblock Available at\\{}\verb+http://arxiv.org/pdf/1604.01014v1.pdf+.

\bibitem{SteindlPhD2015}
M.~Steindl.
\newblock {\em Computational {C}omplexity of the {S}ubpower {M}embership
  {P}roblem for {S}emigroups}.
\newblock PhD thesis, Johannes Kepler University Linz, Austria, 2015.
\newblock Available at\\{}\verb+http://epub.jku.at/obvulihs/download/pdf/893649?originalFilename=true+.

\bibitem{Willard2007}
R.~Willard.
\newblock Four unsolved problems in congruence permutable varieties.
\newblock Talk at International Conference on Order, Algebra, and Logics,
  Vanderbilt University, Nashville, June 12\textendash16, 2007.

\end{thebibliography}

\end{document}